\newcommand{\be}{\begin{equation}}
\newcommand{\ee}{\end{equation}}
\newcommand{\beq}{\begin{eqnarray}}
\newcommand{\eeq}{\end{eqnarray}}
\newtheorem{prop}{Proposition}[section]
\newtheorem{remark}[prop]{Remark}
\def\begeq{\begin{equation}}
\def\endeq{\end{equation}}
\def\odot{\setbox0=\hbox{$\bigcirc$}\relax \mathbin {\hbox
to0pt{\raise.5pt\hbox to\wd0{\hfil $\wedge$\hfil}\hss}\box0 }}
\numberwithin{equation} {section}
\numberwithin{equation}{section}
\newtheorem{theorem}{\bf Theorem}[section]
\newtheorem{proposition}[theorem]{\bf Proposition}
\newtheorem{definition}[theorem]{\bf Definition}
\newtheorem{lemma}[theorem]{\bf Lemma}
\newtheorem{corollary}[theorem]{\bf Corollary}
\begin{document}

\title[inverse curvature flows] {non-parametric inverse curvature flows in the AdS-Schwarzschild manifold}

\author{
Li Chen
 and Jing Mao}
\address{
Faculty of Mathematics and Statistics, Hubei University, Wuhan
430062,  P.R. China. } \email{chernli@163.com, jiner120@163.com}

\thanks{
This research was supported in part by the National Natural Science
Foundation of China (11201131,11401131) and Hubei Key Laboratory of
Applied Mathematics (Hubei University).}


\date{}
\begin{abstract} We consider the inverse curvature
flows in  the anti-de Sitter-Schwarzschild manifold with star-shaped
initial hypersurface, driven by the 1-homogeneous curvature
function. We show that the solutions exist for all time and the
principle curvatures of the hypersurface converges to 1
exponentially fast.
\end{abstract}

\maketitle {\it \small{{\bf Keywords}: Inverse curvature flows,
AdS-Schwarzschild manifold, homogeneous curvature function.}

{{\bf MSC}: Primary 58E20, Secondary
35J35.}
}

\section{Introduction}
During the past decades, geometric flows have been studied
intensively. Following the ground breaking work of Huisken
\cite{Hu}, who considered the mean curvature flow, several authors
started to investigate inverse, or expanding curvature flows of
star-shaped closed hypersurfaces in ambient spaces of constant or
asymptotically constant sectional curvature. Gerhardt \cite{Ge1} and
Urbas \cite{Ur} independently considered flows of the form
\begin{equation}\label{0}
\frac{d}{dt}X=\frac{1}{F}\nu
\end{equation}
in $\mathbb{R}^{n+1}$, where $F$ is a curvature function homogeneous
of degree 1, and proved that the flow exists for all time and
converges to infinity. After a proper rescaling, the rescaled flow
will converge to a sphere.

The equation \eqref{0} has the property that it is scale-invariant
which seems to be the underlying reason why expanding curvature
flows in Euclidean space do not develop singularities contrary to
contracting curvature flows which will contract to a point in finite
time (see \cite{Hu}). Similar convergence results for inverse
curvature flows in the hyperbolic space were estimated by Ding
\cite{Di} and Gerhardt \cite{Ge2}, and in the sphere  by Gerhardt
\cite{Ge5} and Makowski-Scheuer \cite{Ma}. In \cite{Di}, Ding  also
get similar results in rotationally symmetric spaces of Euclidean
volume growth except the hyperbolic space. Compared with
scale-invariant flows, there may be some difference for
non-scale-invariant inverse curvature flows (see \cite{Ur1},
\cite{Ge6} and \cite{Sc1}).

It is a natural question, whether one can prove long-time existence
and the flow hypersurfaces become umbilic as in case of more general
ambient spaces. Recently, Brendle-Hung-Wang \cite{Be} investigated
the inverse mean curvature flow (IMCF for short) in anti-de
Sitter-Schwarzschild manifold which is asymptotically hyperbolic at
the infinity, and applied the convergence result to prove a sharp
Minkowski inequality for strictly mean convex and star-shaped
hypersurface in anti-de Sitter-Schwarzschild manifold. Similar
applications can be found in the works \cite{Gw1} and \cite{Li}, in
which the IMCF was used to prove a Minkowski type inequality in the
anti-de Sitter-Schwarzschild manifold and in the Schwarzschild
manifold respectively. Other geometric inequalities,  e.g.,
Aleksandrov-Fenchel inequalities in hyperbolic space as in
\cite{Gw2,Gw3} have been proven also using inverse 1-homogeneous
curvature flows \cite{Ge2} (also compare with \cite{Ma}, where
additional isoperimetric type problems have been treated).

In the present work, we investigate the convergence of the flow
\eqref{0} in some asymptotically hyperbolic space. More precisely,
we consider the convergence of the flow \eqref{0} in anti-de
Sitter-Schwarzschild manifold which is asymptotically hyperbolic at
the infinity. Recently, Lu \cite{Lu} considered the inverse hessian
quotient curvature flow with star-shaped initial hypersurface in the
anti-de Sitter-Schwarzschild manifold and proved that the solution
exists for all time, and the second fundamental form converges to
identity exponentially fast.

Let us first recall the definition of the anti-de
Sitter-Schwarzschild manifold (see also \cite{Be}). Fixed a real
number $m>0$, and let $s_{0}$ denote the unique positive solution of
the equation $1+s_{0}-ms_{0}^{1-n}=0$. The anti-de
Sitter-Schwarzschild manifold is an $(n+1)$-dimensional manifold
$M=[s_{0}, +\infty)\times\mathbb{S}^{n}$ equipped with the
Riemannian metric
$$\overline{g}=\frac{1}{1-ms^{1-n}+s^2}ds\otimes ds+s^{2}g_{\mathbb{S}^{n}},$$
where $g_{\mathbb{S}^{n}}$ is the standard round metric on the unit
sphere $\mathbb{S}^{n}$. Clearly, $\overline{g}$ is asymptotically
hyperbolic, since the sectional curvatures of $(M, \overline{g})$
approach -1 near infinity.

The anti-de Sitter-Schwarzschild manifold are examples of the static
spaces. If we define $f=\sqrt{1-ms^{1-n}+s^{2}}$, then it satisfies
the equation
\begin{equation}\label{1.1}
(\overline{\Delta}f)\overline{g}-\overline{\nabla}^{2}f+fRic=0.
\end{equation}
 In general, a Riemannian metric is called static if it satisfies
 \eqref{1.1} for some positive function $f$. The condition
 \eqref{1.1} guarantees the Lorentzian warped product $-f^2 dt\otimes
 dt+\overline{g}$ is a solution of the Einstein equation.

In order to formulate the main result, we need a definition below
(see also \cite{Sc1}).
\begin{definition}\label{1.2}
Let $\Gamma \subset \mathbb{R}^{n}$ be an open, symmetric and convex
cone and $F\in C^{\infty}(\Gamma)$  be a symmetric function. A
hypersurface $\Sigma_0$ in the anti-de Sitter-Schwarzschild manifold
$(M, \overline{g})$ is called F-admissable, if at any point $x\in
\Sigma_0$ the principal curvatures of $\Sigma_0$, $\kappa_1$, ...,
$\kappa_n$, are contained in the cone $\Gamma$.
\end{definition}

 We mainly get the following result

\begin{theorem}\label{main}
Let $\Gamma \subset \mathbb{R}^{n}$ be an open, symmetric and convex
cone that satisfies
$$\Gamma_{+}=\{(\kappa_i)\in \mathbb{R}^{n}: \kappa_{i}>0, \ \ \forall 1\leq i \leq n\}\subset \Gamma$$
and $F\in C^{\infty}(\Gamma)\cap C^{0}(\overline{\Gamma})$ be a
monotone, 1-homogeneous and concave curvature function, such that
$$F|_{\Gamma}>0 \quad \mbox{and} \quad F|_{\partial\Gamma}=0.$$
We usually normalized $F$ such that $$F(1,...,1)=n.$$ Let $\Sigma_0
$ be a smooth, star-shaped and $F$-admissable embedded closed
hypersurface in AdS-Schwarzschild manifold $(M, \overline{g})$, and
$\Sigma_0$ can be written as a graph over a geodesic sphere
$\mathbb{S}^{n}$,
$$\Sigma_0=\emph{graph} \ r(0, .).$$
Then

(1) There is a unique smooth curvature flow
$$X: [0, \infty)\times \Sigma\rightarrow M,$$
which satisfies the flow equation
\begin{equation}\label{3.01}
\left\{\begin{array}{l}
\frac{d}{dt}X=\frac{1}{F}\nu,\\
X(0)=\Sigma_0.
\end{array}\right.
\end{equation}
where $\nu(t,\xi)$ is the outward normal to $\Sigma_{t}=X(t,
\Sigma)$ at $X(t,x)$, $F$ is evaluated at the principle curvatures
of $\Sigma_t$ at $X(t,\xi)$ and the leaves $\Sigma_t$ are graphs
over $\mathbb{S}^{n}$,
$$\Sigma_{t}=\emph{graph }\ r(t,.).$$

(2) The leaves $\Sigma_{t}$ become more and more umbilic, namely
$$\mid h^{i}_{j}-\delta^{i}_{j}\mid\leq C e^{-\frac{2t}{n}}.$$

(3) Furthermore, the function $$\widetilde{r}(t, \theta)=\ r(t,
\theta)-\frac{t}{n} $$ converges to a well defined function
$f(\theta)\in C^{2}(\mathbb{S}^{n})$ in $C^{2, \alpha}$ as
$t\rightarrow+\infty$, which implies that the limit of the rescaled
induced metric of $\Sigma_t$ is the conformal metric
$e^{2f}g_{\mathbb{S}^n}$ on $\mathbb{S}^n$, where $g_{\mathbb{S}^n}$
is the round metric $\mathbb{S}^n$.
\end{theorem}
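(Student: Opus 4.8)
The proof follows the by-now-classical Gerhardt--Urbas scheme, adapted to the warped-product structure of the AdS--Schwarzschild manifold as in \cite{Ge2}, \cite{Be} and \cite{Lu}: reduce the flow to a scalar parabolic equation, prove $C^0$, $C^1$ and $C^2$ a priori estimates (the last being the crux), bootstrap to $C^\infty$, continue the solution for all time, and finally read off the asymptotics of the rescaled graph. First I would put $\overline g$ in the form $\overline g=dr\otimes dr+\lambda(r)^2 g_{\mathbb{S}^n}$, where $\lambda$ is defined by $\lambda'(r)=\sqrt{1-m\lambda^{1-n}+\lambda^2}$; one checks $\lambda'>0$, $\lambda''=\lambda+\tfrac{m(n-1)}{2}\lambda^{-n}\ge\lambda>0$, and $\lambda'/\lambda\to 1$, $\lambda''/\lambda\to 1$ as $r\to\infty$. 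Since $\Sigma_t=\mathrm{graph}\,r(t,\cdot)$ is star-shaped, \eqref{3.01} is equivalent to the scalar parabolic equation $\partial_t r=v/F$ with $v=\sqrt{1+\lambda^{-2}|\nabla r|^2}$ (gradient and norm in $g_{\mathbb{S}^n}$), $F$ being evaluated at the principal curvatures of $\Sigma_t$, which are themselves expressed via $\lambda$, $\lambda'$ and the Hessian of $r$. Since $\Gamma_+\subset\Gamma$ and $\Sigma_0$ is $F$-admissible, $F$ is uniformly elliptic near the initial data, so standard parabolic theory gives a unique smooth solution on a maximal interval $[0,T^*)$ with the leaves remaining graphs.

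For the $C^0$ estimate I would compare with radial solutions. For a radial solution $\rho(t)$ the geodesic spheres have all principal curvatures equal to $\lambda'/\lambda$, so by $1$-homogeneity and $F(1,\dots,1)=n$ the equation becomes $\dot\rho=\lambda(\rho)/(n\lambda'(\rho))$, i.e.\ $\tfrac{d}{dt}\log\lambda(\rho)=\tfrac1n$; hence $\lambda(\rho(t))=\lambda(\rho(0))e^{t/n}$, and since $\int^\infty(\lambda'/\lambda-1)\,dr<\infty$ one gets $\rho(t)-t/n$ convergent. Comparison with suitable radial sub/super\-solutions via the parabolic maximum principle yields $|r(t,\cdot)-t/n|\le C$; in particular $r\to\infty$ uniformly, which is exactly what makes the asymptotically hyperbolic estimates on $\lambda'/\lambda$ and $\lambda''/\lambda$ usable. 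For the $C^1$ estimate I would compute the evolution equation of $v$ (equivalently of $\tfrac12|\nabla r|^2$, or of $v$ weighted by a function of $r$) and run the maximum principle; using $\lambda'>0$, $\lambda''\ge\lambda$ and the $C^0$ bound, one obtains that $\lambda^{-2}|\nabla r|^2$ is bounded, in fact $\lambda^{-2}|\nabla r|^2\le Ce^{-2t/n}\to 0$. Consequently the flow stays uniformly parabolic and the leaves stay star-shaped and $F$-admissible.

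The main obstacle is the $C^2$ (curvature) estimate. Differentiating the flow twice produces an evolution equation $\partial_t h^i_j=F^{kl}\nabla_k\nabla_l h^i_j+(\text{lower order})$ for the Weingarten operator, and three ingredients must be combined: (a) the concavity of $F$, which supplies the favourable sign of the third-derivative term $F^{kl,pq}h_{kl;i}h_{pq;j}$ that appears after commuting covariant derivatives (this is where concavity is essential, exactly as in \cite{Ge2}); (b) the Gauss and Codazzi equations of $(M,\overline g)$, whose Riemann tensor and its first derivatives are controlled by the explicit form of $\lambda$ and are close to those of $\mathbb{H}^{n+1}$, with quantitatively decaying error, because $r$ is large by the $C^0$ estimate; and (c) the $C^0$ and $C^1$ bounds. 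Applying the maximum principle to an appropriate test quantity (e.g.\ $\kappa_{\max}/v$ for an upper barrier, and $F$ itself for the behaviour of the trace) one shows that the principal curvatures stay bounded and bounded away from $\partial\Gamma$; then Krylov--Safonov and Schauder estimates give uniform $C^{2,\alpha}$ and hence $C^\infty$ bounds, and a standard continuation argument yields $T^*=\infty$, which is part (1). For the sharp rate, one tests with a quantity measuring umbilicity, e.g.\ $\chi:=v^2\big(|A|^2-\tfrac1n H^2\big)$, together with ODEs for $\sup_{\Sigma_t}F$ and $\inf_{\Sigma_t}F$; using (a)--(c) again, the evolution inequality for $\chi$ carries a strictly negative linear term with coefficient $\sim 2/n$, giving $\chi\le Ce^{-2t/n}$ and $|F-n|\le Ce^{-2t/n}$, hence $|h^i_j-\delta^i_j|\le Ce^{-2t/n}$, which is part (2). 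The delicate points here are handling the non-constancy of the ambient curvature (absorbed by the $C^0$ estimate, which forces $r\to\infty$ and quantitative closeness to $\mathbb{H}^{n+1}$) and choosing the auxiliary function so that the linear term comes out with the correct coefficient.

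Finally, for part (3), set $\widetilde r(t,\theta)=r(t,\theta)-t/n$. From $\partial_t r=v/F$ and the estimates above, $v=1+O(e^{-2t/n})$ and $F=n+O(e^{-2t/n})$, so $|\partial_t\widetilde r|\le Ce^{-2t/n}$, which is integrable in $t$; hence $\widetilde r(t,\cdot)$ converges uniformly to some function $f$. Since $t/n$ is independent of $\theta$, $\widetilde r$ inherits the uniform $C^{2,\alpha}$ bound of $r$, so by interpolation the convergence holds in $C^{2,\alpha'}$ for every $\alpha'<\alpha$, with $f\in C^{2,\alpha'}(\mathbb{S}^n)\subset C^2(\mathbb{S}^n)$. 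The induced metric of $\Sigma_t$ is $g_{ij}=\lambda(r)^2\sigma_{ij}+r_i r_j$, where $\sigma=g_{\mathbb{S}^n}$; since $\log\lambda(r)-r$ converges and $r=t/n+\widetilde r$, one has $\lambda(r)^2 e^{-2t/n}\to e^{2f}$ (absorbing the fixed additive constant into $f$), while $r_i r_j=\widetilde r_i\widetilde r_j\to f_i f_j$ is of lower order relative to $\lambda^2\sim e^{2t/n}$. Therefore the rescaled induced metric $e^{-2t/n}g_{ij}$ converges to the conformal metric $e^{2f}g_{\mathbb{S}^n}$, which proves part (3).
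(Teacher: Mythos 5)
Your proposal follows essentially the same route as the paper: the Gerhardt--Scheuer scheme of scalar reduction, $C^0$ bounds by radial comparison, gradient decay for $\varphi=\int^r\lambda^{-1}$, curvature bounds via concavity of $F$ and maximum-principle test functions combining $\log\kappa_{\max}$ with $\chi=\lambda/v$ and $r-t/n$, Krylov--Safonov for $T^*=\infty$, and then the exponential umbilicity rate and convergence of $\widetilde r$. The only thin spot is the pinching step: before the linear term in the evolution of your umbilicity quantity can dominate (and before the $|\nabla A|^2$ and reaction terms can be absorbed), one needs the preliminary \emph{qualitative} convergence $\kappa_i\to 1$ and $F\to n$ without a rate, which the paper obtains via a separate argument using Scheuer's lemma on almost-monotone Lipschitz functions; your sketch implicitly assumes this but should make it explicit.
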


\begin{remark}
Similar to \cite{Hung} and \cite{Ne}, in general, the function
$f(\theta)$ in Theorem \ref{main} may not be constant in the sense
that the limit shape of the rescaled flow hypersurfaces does not
have to be a round sphere.
\end{remark}

The main techniques employed here were from \cite{Ge2} and later
were developed by Scheuer in \cite{Sc1}.

\textbf{Acknowledgement:} The authors would like to express
gratitude to Professor Guofang Wang for some suggestive comments and
they also thank Dr. Hengyu Zhou for pointing out the lost curvature
terms in the Codazzi equation.

\section{Graphic hypersurfaces in the AdS-Schwarzschild manifold and a reformulation of the problem}

First, we state some general facts about the AdS-Schwarzschild
manifold and the graphic hypersurfaces in it. We basically follow
the description in \cite[Section 2]{Be}. Denote the
AdS-Schwarzschild manifold by  $(M, \overline{g})$ and
$\overline{\nabla}$ by the Levi-Civata connection with respect to
the metric $\overline{g}$. By a change of variable, the
AdS-Schwarzschild metric can be rewritten as
$$\overline{g}=dr\otimes dr+\lambda(r)^{2}g_{\mathbb{S}^{n}},$$
where $\lambda(r)$ satisfies the ODE
\begin{equation}\label{2.001}
\lambda^{\prime}(r)=\sqrt{1+\lambda^2-m\lambda^{1-n}}
\end{equation}
and the asymptotic expansion
\begin{equation}\label{2.01}
\lambda(r)=sinh(r)+\frac{m}{2(n+1)}sinh^{-n}(r)+O(sinh^{-n-2}(r)).
\end{equation}
We can calculate the asymptotic expansion of Riemannian curvature
tensors. Let $e_{\alpha}$, $\alpha=1,2,...,n+1$, be an orthonormal
frame and $\overline{R}_{\alpha\beta\gamma\mu}$ denote the
Riemannian curvature tensor of the AdS-Schwarzschild metric. Then
\begin{equation}\label{2.1}
\overline{R}_{\alpha\beta\gamma\mu}=-\delta_{\beta\mu}\delta_{\alpha\gamma}
+\delta_{\beta\gamma}\beta_{\alpha\mu}+O(e^{-(n+1)r})
\end{equation}
and
\begin{equation}\label{2.2}
\overline{\nabla}_{\rho}\overline{R}_{\alpha\beta\gamma\mu}=O(e^{-(n+1)r}).
\end{equation}

Since $\Sigma\subset M$ is a graphic hypersurface in $M$, it can be
parametrized by
$$\Sigma=\{(r(\theta), \theta): \theta\in \mathbb{S}^{n}\}$$
for some smooth function $r$ on $\mathbb{S}^{n}$. Let
$\theta=\{\theta^{i}\}_{i=1,...,n}$ be a local coordinate system on
$\mathbb{S}^{n}$ and let $\partial_{i}$ be the corresponding
coordinate vector fields on $\mathbb{S}^{n}$ and
$\sigma_{ij}=g_{\mathbb{S}^{n}}(\partial_{i},
\partial_{j})$.Let $\varphi_{i}=D_{i}\varphi$, $\varphi_{ij}=D_{j}D_{i}\varphi$ and
$\varphi_{ijk}=D_{k}D_{j}D_{i}\varphi$ denote the covariant
derivatives of $\varphi$ with respect to the round metric
$g_{\mathbb{S}^{n}}$ and $\nabla$ be the Levi-Civata connection of
$\Sigma$ with respect to the induced metric $g$ from $(M,
\overline{g})$. Set $X=(r(\theta), \theta)$, the tangential vectors
on $\Sigma$ take the form
$$X_i=\partial_i+D_{i}r \partial_r.$$
The induced metric on $\Sigma$ is
$$g_{ij}=r_i r_j+\lambda^2\sigma_{ij},$$
and the outward unit normal vector of $\Sigma$
$$\nu=\frac{1}{v}\bigg(\partial_{r}-\lambda^{-2}D^{j}r\partial_{j}\bigg).$$
Define a new function $\varphi: \mathbb{S}^{n}\rightarrow\mathbb{R}$
by
\begin{equation}\label{02.01}
\varphi(\theta)=\int_{c}^{r(\theta)}\frac{1}{\lambda(s)}ds.
\end{equation}
Then the induced metric on $\Sigma$ takes the form
$$g_{ij}=\lambda^2(\varphi_i\varphi_j+\sigma_{ij})$$
with the inverse
$$g^{ij}=\lambda^{-2}(\sigma^{ij}-\frac{\varphi^i\varphi^j}{v^2}),$$
where $(\sigma^{ij})=(\sigma_{ij})^{-1}$,
$\varphi^i=\sigma^{ij}\varphi_j$ and
\begin{equation}\label{2.7}
v^2=1+\sigma^{ij}\varphi_i\varphi_j\equiv1+\mid
D\varphi\mid^2=1+\frac{\mid  D r\mid^2}{\lambda^2},
\end{equation}
$|.|$ is the norm corresponding to the metric $g_{\mathbb{S}^n}$.
Let $h_{ij}$ be the second fundamental form of $\Sigma\subset M$ in
term of the coordinate $\theta^i$. So
$$h_{ij}=\frac{\lambda}{v}\bigg(\lambda^{\prime}(\varphi_i\varphi_j+\sigma_{ij})-\varphi_{ij}\bigg)$$
and
\begin{equation}\label{2.07}
h^{i}_{j}=\frac{1}{\lambda
v}(\lambda^{\prime}\delta^{i}_{j}-\widetilde{g}^{ik}\varphi_{kj}),
\end{equation}
where
$\widetilde{g}^{ij}=\sigma^{ij}-\frac{\varphi^i\varphi^j}{v^2}$.

To calculate some curvature terms, we need the following result from
Appendix A in \cite{Pe}.
\begin{lemma}
$$\overline{R}(\partial_i, \partial_j, \partial_k, \partial_l)=\lambda^2(1-(\lambda^{\prime})^2)(\sigma_{ik}\sigma_{jl}-\sigma_{il}\sigma_{jk})$$
$$\overline{R}(\partial_i, \partial_r, \partial_j, \partial_r)=-\lambda\lambda^{\prime\prime}\sigma_{ij}$$
\end{lemma}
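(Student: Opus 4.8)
The plan. The assertion is a purely local identity for the Riemann tensor of the warped product $\overline g=dr\otimes dr+\lambda(r)^2 g_{\mathbb S^n}$, so the natural route is to compute the Christoffel symbols in the product coordinates $(r,\theta^1,\dots,\theta^n)$ and substitute them into the definition of curvature. Write $\overline g_{rr}=1$, $\overline g_{ri}=0$, $\overline g_{ij}=\lambda^2\sigma_{ij}$, with inverse $\overline g^{rr}=1$, $\overline g^{ij}=\lambda^{-2}\sigma^{ij}$. From $\overline\Gamma^{\gamma}_{\alpha\beta}=\tfrac12\overline g^{\gamma\delta}(\partial_\alpha\overline g_{\beta\delta}+\partial_\beta\overline g_{\alpha\delta}-\partial_\delta\overline g_{\alpha\beta})$ one reads off
\[
\overline\Gamma^{r}_{ij}=-\lambda\lambda'\sigma_{ij},\qquad
\overline\Gamma^{k}_{ri}=\frac{\lambda'}{\lambda}\,\delta^k_i,\qquad
\overline\Gamma^{k}_{ij}=\widehat\Gamma^{k}_{ij},
\]
where $\widehat\Gamma^{k}_{ij}$ denote the Christoffel symbols of $(\mathbb S^n,\sigma)$, while every symbol carrying a repeated $r$-index vanishes because $\overline g_{rr}$ is constant and $\overline g_{ri}\equiv0$.

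Next I substitute into $\overline R(\partial_\alpha,\partial_\beta)\partial_\gamma=\overline\nabla_{\partial_\alpha}\overline\nabla_{\partial_\beta}\partial_\gamma-\overline\nabla_{\partial_\beta}\overline\nabla_{\partial_\alpha}\partial_\gamma$ (no bracket term, coordinate fields). Two observations keep the bookkeeping short: $\partial_i$ annihilates any function of $r$ alone, so $\partial_i\lambda=\partial_i\lambda'=\partial_i\lambda''=0$; and $\sigma$ is parallel for $\widehat\nabla$. Expanding $\overline R(\partial_i,\partial_j)\partial_k$, the $\partial_r$-component equals $-\lambda\lambda'\big[(\partial_i\sigma_{jk}+\widehat\Gamma^m_{jk}\sigma_{im})-(i\leftrightarrow j)\big]$, which vanishes because $\widehat\nabla\sigma=0$ rewrites $\partial_i\sigma_{jk}+\widehat\Gamma^m_{jk}\sigma_{im}$ as an expression symmetric in $i$ and $j$; the tangential part assembles into the round-sphere curvature $R^{\sigma}(\partial_i,\partial_j)\partial_k=\sigma_{jk}\partial_i-\sigma_{ik}\partial_j$ plus a correction $-(\lambda')^2(\sigma_{jk}\partial_i-\sigma_{ik}\partial_j)$ coming from the products $\overline\Gamma^r_{ij}\overline\Gamma^k_{rm}$, so that $\overline R(\partial_i,\partial_j)\partial_k=(1-(\lambda')^2)(\sigma_{jk}\partial_i-\sigma_{ik}\partial_j)$; lowering an index with $\overline g_{il}=\lambda^2\sigma_{il}$ yields the first formula. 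For the mixed term, expanding $\overline R(\partial_i,\partial_r)\partial_j$ the tangential pieces ($\tfrac{\lambda'}{\lambda}\widehat\Gamma^m_{ij}\partial_m$) cancel and the $\partial_r$-coefficient telescopes to $(-(\lambda')^2)-(-(\lambda')^2-\lambda\lambda'')=\lambda\lambda''$, i.e. $\overline R(\partial_i,\partial_r)\partial_j=\lambda\lambda''\,\sigma_{ij}\,\partial_r$; contracting with $\overline g_{rr}=1$ gives the second formula.

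The only point that needs attention is the $(0,4)$-sign convention. The computation above produces $\overline g(\overline R(\partial_i,\partial_j)\partial_k,\partial_l)=+\lambda^2(1-(\lambda')^2)(\sigma_{jk}\sigma_{il}-\sigma_{ik}\sigma_{jl})$ and $\overline g(\overline R(\partial_i,\partial_r)\partial_j,\partial_r)=+\lambda\lambda''\sigma_{ij}$, so the convention in force here is $\overline R(X,Y,Z,W)=-\overline g(\overline R(X,Y)Z,W)$, which is precisely the one under which \eqref{2.1} holds: as $r\to\infty$ one has $\lambda\sim\sinh r$, $\lambda'\sim\cosh r$, $\lambda''\sim\sinh r$, hence $\lambda^2(1-(\lambda')^2)\sim-\sinh^4 r$ and $-\lambda\lambda''\sim-\sinh^2 r\cosh^2 r$, so in an orthonormal frame $e_i=\lambda^{-1}\partial_i$ the two identities read $\overline R(e_i,e_j,e_k,e_l)\sim-\delta_{ik}\delta_{jl}+\delta_{il}\delta_{jk}$ and $\overline R(e_i,e_r,e_j,e_r)\sim-\delta_{ij}$, matching \eqref{2.1}. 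Equivalently, the whole lemma is the specialization of O'Neill's warped-product curvature formulas to $([s_0,\infty),dr^2)\times_\lambda(\mathbb S^n,\sigma)$, whose fiber and radial sectional curvatures are $(1-(\lambda')^2)/\lambda^2$ and $-\lambda''/\lambda$ respectively; this is presumably the line taken in Appendix A of \cite{Pe}. Beyond fixing this convention — and making sure the $\sigma$-curvature term is not dropped — there is no genuine obstacle; the computation is elementary.
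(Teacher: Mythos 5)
Your computation is correct: the Christoffel symbols you list for $\overline g=dr\otimes dr+\lambda^2 g_{\mathbb S^n}$ are right, the radial component of $\overline R(\partial_i,\partial_j)\partial_k$ does cancel by $\widehat\nabla\sigma=0$, and the tangential and mixed terms assemble exactly as you say, reproducing the warped-product curvature formulas. The paper itself offers no proof of this lemma --- it simply quotes the formulas from Appendix A of \cite{Pe} --- so your direct coordinate computation supplies the omitted details; in particular, your explicit identification of the $(0,4)$-sign convention and the consistency check against the asymptotic expansion \eqref{2.1} settle a point the paper leaves implicit, and your remark that the lemma is the specialization of the standard warped-product (O'Neill) curvature identities is precisely the content of the cited reference.
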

Then, we can calculate some curvature terms by using the above
lemma.
\begin{eqnarray}\label{c1}
\overline{R}(X_i, \nu, X_j, \nu)&\equiv&\overline{R}_{i\nu
j\nu}=-\frac{1}{v^2}\big[\lambda
\lambda^{\prime\prime}+((\lambda^{\prime})^2-1)\mid D
\varphi\mid^2\big]\sigma_{ij}\\
\nonumber&&-\frac{1}{v^2}\big[\frac{2\lambda^{\prime\prime}}{\lambda}+\frac{1-(\lambda^{\prime})^2}
{\lambda^2}+\frac{\lambda^{\prime\prime}}{\lambda} \mid D
\varphi\mid^2\big]r_i r_j
\end{eqnarray}
and
\begin{eqnarray}\label{c2}
\overline{R}(\nu, X_i, X_k, X_j)\equiv\overline{R}_{\nu i k
j}=(-\lambda\lambda^{\prime\prime}-(1-(\lambda^{\prime})^2))\frac{r_k
\sigma_{ij}}{v}
+(\lambda\lambda^{\prime\prime}+(1-(\lambda^{\prime})^2))\frac{r_j
\sigma_{ik}}{v}.
\end{eqnarray}
Thus,
\begin{eqnarray}\label{c3}
v \overline{R}_{\nu i k j}r^k=\frac{1}{v^2
\lambda^2}(\lambda\lambda^{\prime\prime}+(1-(\lambda^{\prime})^2))[r_i
r_j-\lambda^2 \mid D \varphi\mid^2 \sigma_{ij}]
\end{eqnarray}
The geodesic spheres $S_{r}$ in the AdS-Schwarzschild manifold $(M,
\overline{g})$ are totally umbilic, their second fundamental form is
given by
$$\overline{h}_{ij}=\overline{h}_{ij}(r)=\frac{\lambda^{\prime}}{\lambda}\overline{g}_{ij},$$
where
$$\overline{g}_{ij}=\lambda^{2}\sigma_{ij}.$$
Thus
$\overline{h}^{i}_{j}=\frac{\lambda^{\prime}}{\lambda}\delta^{i}_{j}$,
$\kappa_{i}=\frac{\lambda^{\prime}}{\lambda}$ and the mean curvature
$\overline{H}$ of $S_r$ is given by
$$\overline{H}=\overline{H}(r)=\frac{n \lambda^{\prime}}{\lambda}.$$

For the evolution of graphic hypersurfaces, we can reform the
equation \eqref{0}. Let $\Sigma_0$ be a graphic hypersurface in
AdS-Schwarzschild manifold $(M, \overline{g})$ which is given by an
embedding
$$X_0: \mathbb{S}^{n}\rightarrow M.$$
Let $X_t: \mathbb{S}^{n}\rightarrow M$, $t\in[0, T)$, be the
solution of inverse curvature flow with initial data given by $X_0$.
In other word,
\begin{equation}\label{3.1}
\frac{\partial X}{\partial t}=\frac{1}{F}\nu,
\end{equation}
where $\nu$ is the outward unit normal vector and $F$ is a monotone,
1-homogeneous and concave curvature function. We shall call
\eqref{3.1} the parametric form of the flow. We can write the
initial hypersurface $\Sigma_0$ as the graph of a function $r_0$
defined on the unit sphere:
$$\Sigma_0=\{(r_0(\theta), \theta): \theta\in \mathbb{S}^{n}\}.$$
If each $\Sigma_t$ is graphic, it can be parametrized as follows
$$\Sigma_t=\{(r(\theta, t), \theta): \theta\in \mathbb{S}^{n}\}.$$
Then the evolution equation \eqref{3.1} now yields
$$\frac{dr}{dt}=\frac{1}{F v} \quad \mbox{and}  \quad  \frac{d \theta^i}{dt}=-\frac{D^{i}r}{\lambda^{2}Fv},$$
from which we deduce
\begin{equation}\label{3.2}
\frac{\partial r}{\partial t}=\frac{v}{F},
\end{equation}
where $v$ is given by \eqref{2.7}. Therefore, as long as the
solution of \eqref{3.01} exists and remain graphic, it is equivalent
to a parabolic PDE \eqref{3.2} for $r$. The equation \eqref{3.2} is
also referred as the non-parametric form of the inverse mean
curvature flow. Notice that the velocity vector of \eqref{3.01} is
always normal, while the velocity vector of \eqref{3.2} is in the
direction of $\partial_r$. To go from one to the other, we take the
difference which is a (time-dependent) tangential vector field and
compose the flow of the reparametrization associated with the
tangent vector field.

The proof of the short time existence of the flow \eqref{3.01} is
standard, see Remark 3.4 in \cite{Sc1} and Remark 2.1 in \cite{Sc2}.
For completeness, we describe it easily here.We can get the short
time existence of the flow on a maximal interval $[0, T^{*})$,
$0<T^{*}\leq\infty$, and
$$X\in C^{\infty}([0, T^{*})\times \Sigma, M).$$
Moreover, all the leaves $M(t)=X(t, M)$, $0\leq t<T^{*}$, are
admissable and can be written as graphs over $\mathbb{S}^{n}$.
Furthermore, the flow $X$ exists as long as the scalar flow
\eqref{3.2} does, where
$$r: [0, T^*)\times \mathbb{S}^{n}\longrightarrow \mathbb{R}.$$
Thus, we will mainly investigate the long time existence of
\eqref{3.01} in the following chapters.

\section{the long-time existence }

The proof of the long-time existence of \eqref{3.01} is standard
which mainly relies on the following $C^0$ estimates, $C^1$
estimates and curvature estimates. Before proceeding, we give some
notation. Covariant differentiation will usually be denoted by
indices, e.g. $r_{ij}$ for a function $r: \Sigma\rightarrow
\mathbb{R}$, or, if ambiguities are possible, by a semicolon, e.g.
$h_{ij;k}$. Usual partial derivatives will be denoted by a comma,
e.g. $u_{i,j}$.

\textbf{$C^0$ estimates}

\

First, we recall the $C^0$ estimates whose proof is standard, see
Lemma 3.1 in \cite{Ge2} and Section 4 in \cite{Di}.
\begin{lemma}\label{4.01}
The solution $r$ of \eqref{3.2} satisfies
\begin{equation}\label{4.1}
\lambda(\inf r(0,\cdot))\leq\lambda(r(t, \theta
))e^{-\frac{t}{n}}\leq\lambda(\sup r(0,\cdot)), \ \ \ \forall
\theta\in \mathbb{S}^{n}, t\in [0, T^*).
\end{equation}

\end{lemma}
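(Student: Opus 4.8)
The plan is to derive the $C^0$ estimate directly from the scalar flow equation \eqref{3.2} by examining the evolution of $\lambda(r(t,\theta))$ at spatial extrema. First I would compute how $\log\lambda(r(t,\theta))$ evolves: since $\partial_t r = v/F$ and $\lambda' = \sqrt{1+\lambda^2 - m\lambda^{1-n}}$, we have $\partial_t \log\lambda = \lambda'v/(\lambda F)$. The key algebraic observation is the comparison between $F$ evaluated at the principal curvatures of $\Sigma_t$ and the quantity $n\lambda'/\lambda$, which is the mean curvature $\overline H$ of the geodesic sphere $S_r$. Using the expression \eqref{2.07} for $h^i_j$ together with monotonicity, $1$-homogeneity, concavity of $F$ and the normalization $F(1,\dots,1)=n$, one shows at a point where $D\varphi$ vanishes (hence $v=1$) that $F$ is controlled above and below in terms of $\lambda'/\lambda$ and the Hessian term $\widetilde g^{ik}\varphi_{kj}$.

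Next I would apply the maximum principle. Fix $t$ and let $\theta_t$ be a point where $r(t,\cdot)$ attains its spatial maximum; there $D r = 0$, so $D\varphi = 0$, $v = 1$, and the Hessian $\varphi_{ij}$ is negative semidefinite. From \eqref{2.07}, $h^i_j = \tfrac{1}{\lambda}(\lambda'\delta^i_j - \sigma^{ik}\varphi_{kj}) \ge \tfrac{\lambda'}{\lambda}\delta^i_j$ in the matrix sense at that point, so by monotonicity and the normalization $F \ge F(\lambda'/\lambda,\dots,\lambda'/\lambda) = n\lambda'/\lambda$. Hence at the maximum, $\tfrac{d}{dt}\log\lambda(\sup r(t,\cdot)) = \tfrac{\lambda' v}{\lambda F} \le \tfrac{\lambda'}{\lambda}\cdot\tfrac{\lambda}{n\lambda'} = \tfrac1n$, which after integration gives $\lambda(r(t,\theta)) \le \lambda(\sup r(0,\cdot))\, e^{t/n}$, i.e. the right-hand inequality of \eqref{4.1}. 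Symmetrically, at a spatial minimum $\varphi_{ij}$ is positive semidefinite, so $h^i_j \le \tfrac{\lambda'}{\lambda}\delta^i_j$, giving $F \le n\lambda'/\lambda$ and $\tfrac{d}{dt}\log\lambda(\inf r(t,\cdot)) \ge \tfrac1n$, which yields the left-hand inequality. To make the extremum argument rigorous I would invoke the standard Hamilton-type trick for Lipschitz-in-time extremal quantities, or equivalently test the PDE for $\sup_\theta r$ and $\inf_\theta r$ in the viscosity/barrier sense.

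The main obstacle is the careful handling of the curvature function $F$ at the extremal points: one must make sure that evaluating $F$ on a matrix that dominates (or is dominated by) $\tfrac{\lambda'}{\lambda}\,\mathrm{Id}$ really gives the claimed inequality. This is where monotonicity of $F$ in the eigenvalues, together with the fact that $F$ is only defined on the cone $\Gamma \supset \Gamma_+$ and that $h^i_j$ lies in $\Gamma$ along the flow (admissibility, preserved by short-time existence), is used; one also needs that $\lambda'/\lambda > 0$ so the comparison sphere is itself $F$-admissible. An alternative, cleaner route is to observe that the geodesic spheres $S_{\rho(t)}$ with $\lambda(\rho(t)) = \lambda(\sup r(0))e^{t/n}$ and $\lambda(\underline\rho(t)) = \lambda(\inf r(0))e^{t/n}$ are themselves exact solutions of \eqref{3.2} (since for a round sphere $v=1$ and $F = n\lambda'/\lambda$, so $\partial_t\rho = \tfrac{\lambda}{n\lambda'}$, which is exactly the ODE making $\lambda(\rho)e^{-t/n}$ constant), and then invoke the avoidance/comparison principle for the scalar parabolic equation \eqref{3.2}: an initial graph squeezed between two such spheres stays squeezed between them. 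I would present this comparison-principle version as the main proof, citing Lemma 3.1 of \cite{Ge2} and Section 4 of \cite{Di} for the details, since it avoids delicate eigenvalue estimates and reduces everything to the scalar maximum principle.
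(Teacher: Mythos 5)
Your proof is correct and is essentially the standard argument the paper itself defers to (it cites Lemma 3.1 of \cite{Ge2} and Section 4 of \cite{Di} rather than writing it out): at a spatial extremum of $r$ one has $v=1$ and $h^i_j \gtrless \frac{\lambda'}{\lambda}\delta^i_j$, hence $F \gtrless \frac{n\lambda'}{\lambda}$ by monotonicity, homogeneity and the normalization, which gives $\frac{d}{dt}\log\lambda(r) \lessgtr \frac1n$; equivalently, geodesic spheres with $\lambda(\rho(t))=\lambda(\rho(0))e^{t/n}$ are exact barriers. No issues.
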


\begin{remark}
Noticing the asymptotic expansion \eqref{2.01} of $\lambda(r)$, we
have from the above lemma
\begin{equation}\label{re4.1}
r(t, \theta)-\frac{t}{n}=o(t).
\end{equation}
\end{remark}

\

\textbf{$C^1$ estimates}

\

To get the $C^1$ estimate, we using the the evolution equation of
$\varphi$ instead of $r$ by noticing the relation \eqref{2.01}. From
\eqref{3.2}, we get
\begin{equation}\label{4.2}
\frac{\partial \varphi}{\partial t}=\frac{v}{\lambda
F(h^{i}_{j})}=\frac{v}{F(\lambda h^{i}_{j})}\equiv
\frac{v}{F(\widetilde{h}^{i}_{j})}.
\end{equation}
Let $(\widetilde{g}_{ij})=(\widetilde{g}^{ij})^{-1}$, clearly,
$g_{ij}=\lambda^2\widetilde{g}_{ij}$. Defining
$$\widetilde{h}_{ij}=\widetilde{g}_{ik}\widetilde{h}^{k}_{j},$$
we see that in \eqref{4.2} we are considering the eigenvalues of
$\widetilde{h_{ij}}$ with respect to $\widetilde{g}_{ij}$ and thus
we define
$$F^{ij}=\frac{\partial F}{\partial \widetilde{h}_{ij}} \ \ \ \mbox{and}\ \ \
F^{i}_{j}=\frac{\partial F}{\partial \widetilde{h}^{j}_{i}}.$$ By a
straightforward computation, it is easy to get the following
relations.
\begin{lemma}\label{le4.2}
\begin{equation*}\label{4.3}
\widetilde{h}^{l}_{k;i}=-\frac{v_i}{v}\widetilde{h}^{l}_{k}-v^{-1}(\widetilde{g}^{lm}_{\
\ ;i}\varphi_{mk}+\widetilde{g}^{lm}\varphi_{mki}-\lambda
\lambda^{\prime\prime}D_{i}\varphi \delta^{l}_{k}),
\end{equation*}
\begin{equation*}\label{4.4}
\widetilde{g}^{kl}_{;\
i}=\frac{2v_{i}\varphi^{k}\varphi^{l}}{v^3}-\frac{1}{v^2}\bigg(\varphi^{k}_{i}\varphi^{l}+\varphi^{k}\varphi^{l}_{i}\bigg),
\end{equation*}
\begin{equation*}\label{4.5}
v_{i}=v^{-1}\varphi_{ki}\varphi^k,
\end{equation*}
where the covariant derivatives as well as index raising are
performed with respect to $\sigma_{ij}$.
\end{lemma}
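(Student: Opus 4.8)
The plan is to obtain all three identities by direct differentiation of the formulas already recorded in Section 2, the only bookkeeping point being that every covariant derivative and every index operation is taken with respect to the fixed round metric $\sigma_{ij}$, so that $\sigma_{ij}$ and $\sigma^{ij}$ are parallel and may be moved freely through $D_i$; in particular no commutation of derivatives is needed and no curvature of $\mathbb{S}^n$ enters.

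First I would prove the last identity. Differentiating $v^2=1+\sigma^{kl}\varphi_k\varphi_l$ from \eqref{2.7} and using $D_i\sigma^{kl}=0$ gives $2v\,v_i=2\sigma^{kl}\varphi_{ki}\varphi_l=2\varphi^k\varphi_{ki}$, i.e. $v_i=v^{-1}\varphi_{ki}\varphi^k$. Next, for the middle identity, recall $\widetilde g^{kl}=\sigma^{kl}-v^{-2}\varphi^k\varphi^l$; since $\sigma^{kl}$ is parallel, $\widetilde g^{kl}_{\;;i}=-D_i(v^{-2}\varphi^k\varphi^l)$, and expanding by the product rule with $D_i(v^{-2})=-2v^{-3}v_i$ and $\varphi^k_{\;i}=\sigma^{km}\varphi_{mi}$ yields $\widetilde g^{kl}_{\;;i}=2v^{-3}v_i\varphi^k\varphi^l-v^{-2}(\varphi^k_{\;i}\varphi^l+\varphi^k\varphi^l_{\;i})$, as claimed.

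Finally, for the identity for $\widetilde h^l_{k;i}$, I rewrite \eqref{2.07} as $\widetilde h^l_k=\lambda h^l_k=v^{-1}(\lambda'\delta^l_k-\widetilde g^{lm}\varphi_{mk})$. Applying $D_i$, the factor $D_i(v^{-1})=-v^{-2}v_i$ reproduces the term $-\tfrac{v_i}{v}\widetilde h^l_k$, while differentiating the bracket gives $\lambda''(D_i r)\,\delta^l_k-\widetilde g^{lm}_{\;;i}\varphi_{mk}-\widetilde g^{lm}\varphi_{mki}$. The only non-mechanical ingredient is the chain rule for $\lambda=\lambda(r(\theta))$ and $\varphi=\varphi(r(\theta))$: from the definition \eqref{02.01} one has $D_i r=\lambda\,D_i\varphi$, so $\lambda''(D_i r)=\lambda\lambda''D_i\varphi$. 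Collecting the terms gives exactly $\widetilde h^l_{k;i}=-\tfrac{v_i}{v}\widetilde h^l_k-v^{-1}(\widetilde g^{lm}_{\;;i}\varphi_{mk}+\widetilde g^{lm}\varphi_{mki}-\lambda\lambda''D_i\varphi\,\delta^l_k)$. There is no genuine analytic obstacle here; the lemma is a purely computational normalization step, and the only thing one must be careful about is not to confuse the connection of $\sigma_{ij}$ with that of the induced metric $g_{ij}=\lambda^2\widetilde g_{ij}$, which is precisely why the identities are stated in terms of $\widetilde h$, $\widetilde g$ and $\sigma$-derivatives.
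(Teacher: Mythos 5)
Your computation is correct and is exactly the "straightforward computation" that the paper asserts without writing out: differentiate $v^2=1+|D\varphi|^2$, then $\widetilde g^{kl}=\sigma^{kl}-v^{-2}\varphi^k\varphi^l$, then $\widetilde h^l_k=v^{-1}(\lambda'\delta^l_k-\widetilde g^{lm}\varphi_{mk})$ with respect to the round connection, using $D_ir=\lambda D_i\varphi$ from \eqref{02.01}. No gaps; this matches the paper's (omitted) argument.
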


\begin{lemma}\label{4.5}
Let $\varphi$ be a solution of \eqref{4.2}, we have
\begin{equation}\label{4.6}
|D\varphi|^2 \leq \sup_{\mathbb{S}^n}|D\varphi(0,\cdot)|^2.
\end{equation}
Moreover, if $F$ is bounded from above $F\leq C$, then there exists
$0 <\mu=\mu(C)$ such that
\begin{equation}\label{4.61}
|D\varphi|^2 \leq e^{-\mu
t}\sup_{\mathbb{S}^n}|D\varphi(0,\cdot)|^2.
\end{equation}
\end{lemma}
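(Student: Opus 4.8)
The plan is to estimate the evolution of $w := |D\varphi|^2 = \sigma^{ij}\varphi_i\varphi_j$ along the flow \eqref{4.2} and apply the parabolic maximum principle. First I would compute $\partial_t \varphi_i$ by differentiating \eqref{4.2} on $\mathbb{S}^n$, then form $\partial_t w = 2\varphi^i \partial_t\varphi_i$. Writing the right-hand side of \eqref{4.2} as $G := v/F(\widetilde h^i_j)$, I would expand $D_iG$ using $D_i v = v^{-1}\varphi_{ki}\varphi^k$ from Lemma~\ref{le4.2} and the chain rule $D_iF = F^k_l\,\widetilde h^l_{k;i}$, where $\widetilde h^l_{k;i}$ is given by the first formula in Lemma~\ref{le4.2}. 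The key structural point is that the resulting equation for $w$ is of the form
\begin{equation*}
\partial_t w = a^{ij}w_{ij} + b^i w_i + c\,w,
\end{equation*}
where $a^{ij}$ is (a positive multiple of) $F^{ij}$ — hence positive definite by the monotonicity of $F$ on $\Gamma_+$ — and, crucially, the zeroth-order coefficient $c$ is nonpositive. The negativity of $c$ comes from the curvature term: expanding carefully, the terms that are quadratic in $\varphi_{ij}$ assemble (after using concavity of $F$ and $F^{ij}\ge 0$) into something with a favorable sign, and the genuinely zeroth-order contribution is governed by $-\lambda\lambda''$-type factors. Here one uses that $\lambda'' = \lambda + \tfrac{(n-1)m}{2}\lambda^{-n} > 0$ (from differentiating \eqref{2.001}), so $-\lambda\lambda'' < 0$, and together with the $F(1,\dots,1)=n$ normalization and the ODE this forces $c \le 0$.

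Granting that, the first assertion \eqref{4.6} follows immediately from the weak maximum principle on the compact manifold $\mathbb{S}^n$: $\max_{\mathbb{S}^n} w(t,\cdot)$ is nonincreasing in $t$. For the second assertion \eqref{4.61}, I would sharpen the sign of $c$ to a strict bound away from zero. Under the extra hypothesis $F \le C$, one has (since $F$ is $1$-homogeneous and $F(\widetilde h^i_j) = \lambda F(h^i_j)$, with the $C^0$ bound of Lemma~\ref{4.01} controlling $\lambda$ from below along the flow) a lower bound on the relevant curvature quantities, so that $c \le -\mu$ for some $\mu = \mu(C) > 0$ depending on $m$, $n$ and the constant $C$. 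Then $\partial_t w \le a^{ij}w_{ij} + b^i w_i - \mu w$, and comparing with the ODE $\dot\zeta = -\mu\zeta$ via the maximum principle gives $\max_{\mathbb{S}^n} w(t,\cdot) \le e^{-\mu t}\max_{\mathbb{S}^n} w(0,\cdot)$, which is \eqref{4.61}.

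The main obstacle I expect is bookkeeping the curvature and gradient terms so that the sign of $c$ is transparent: after substituting Lemma~\ref{le4.2} one gets several terms involving $\varphi_{ij}$, $\varphi^k$, $v$, $v_i$, $\lambda$, $\lambda'$, $\lambda''$, and one must check that everything that is not manifestly of the good divergence/second-order type either cancels or carries the correct sign. The two facts that make this work — and which I would isolate as small sub-lemmas — are (i) $\lambda'' > 0$ with the explicit expression $\lambda\lambda'' = \lambda^2 + \tfrac{(n-1)m}{2}\lambda^{1-n}$, and (ii) concavity of $F$, which controls the sign of the terms quadratic in second derivatives of $\varphi$. I would also double-check that the first-order terms $b^i w_i$ genuinely absorb all the remaining cross terms (those linear in $\varphi_{ij}$), using $2\varphi^i\varphi^j\varphi_{ij} = (|D\varphi|^2)_j\varphi^j$ — i.e.\ that nothing of the form (constant)$\times|D^2\varphi|^2$ with the wrong sign survives. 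Once the coefficient structure is pinned down, the rest is a routine application of the parabolic maximum principle on $\mathbb{S}^n \times [0,T^*)$.
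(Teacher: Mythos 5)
Your plan is essentially the paper's proof: both differentiate the scalar equation \eqref{4.2}, evaluate at a spatial maximum of $|D\varphi|^2$ where $\varphi_{ik}\varphi^i=0$, obtain the favorable zeroth-order term from $-\lambda\lambda''F^{kl}\widetilde g_{kl}$ (using $\lambda''=\lambda+\tfrac{(n-1)m}{2}\lambda^{-n}>0$), and derive the exponential rate by combining $F(\widetilde h^i_j)=\lambda F(h^i_j)$ with $F\le C$ and $F^{kl}\widetilde g_{kl}\ge n$. One small attribution to correct when you carry out the computation: the terms quadratic in $D^2\varphi$ are controlled by monotonicity ($F^{ij}\ge 0$ gives $-F^{ij}\varphi_{ki}\varphi^{k}_{\ j}\le 0$) together with the spherical curvature term $F^{kl}(\varphi_k\varphi_l-|D\varphi|^2\sigma_{kl})\le 0$ arising from commuting third derivatives on $\mathbb{S}^n$, whereas concavity of $F$ is needed only for the lower bound $F^{kl}\widetilde g_{kl}\ge F(1,\dots,1)=n$ entering the decay rate.
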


\begin{proof}
Let $$w=\frac{1}{2}|D\varphi|^2.$$ By differentiating \eqref{4.2}
with respect to the operator $D^{k}\varphi D_k$, we obtain
$$\frac{\partial}{\partial t}w=-\frac{v}{F^2}F^{k}_{l}\widetilde{h}^{l}_{k; i}\varphi^i+\frac{v_i\varphi^i}{F}.$$
Fix $0<T<T^*$ and suppose
$$\sup_{[0, T]\times\mathbb{S}^{n}}w=w(t_0, \xi_0), \ \  t_0>0.$$
Then at $(t_0, \xi_0)$, there holds
\begin{eqnarray*}
0\leq \frac{\partial}{\partial
t}w&=&-\frac{1}{F^2}\bigg(-\widetilde{g}^{lm}_{\ \ ;i}
\varphi_{mki}\varphi^i-\widetilde{g}^{lm}\varphi_{mki}+\lambda
\lambda^{\prime\prime}|D\varphi|^2
\delta^{l}_{k}\bigg)+\frac{2}{v^3}\varphi_{ki}\varphi^{k}\varphi^{i}\\
&=&-\frac{2}{F^2}\lambda
\lambda^{\prime\prime}F^{kl}\widetilde{g}_{kl}w+\frac{1}{F^2}F^{kl}\varphi_{kli}\varphi^i,
\end{eqnarray*}
where we use Lemma \ref{le4.2} and the fact
$\varphi_{ik}\varphi^i=0, \ \forall k$ at $(t_0, \xi_0)$. Then, we
apply the rule for exchanging derivatives
$$\varphi_{kli}=\varphi_{ikl}+R_{ilkm}\varphi^m$$
and notice the fact on $\mathbb{S}^n$
$$R_{ilkm}=\sigma_{ik}\sigma_{lm}-\sigma_{im}\sigma_{lk},$$
we can obtain
\begin{eqnarray*}
0\leq \frac{\partial}{\partial t}w=\frac{1}{F^2}\bigg(-2\lambda
\lambda^{\prime\prime}F^{kl}\widetilde{g}_{kl}w+F^{kl}(\varphi_{k}\varphi_l
-|D\varphi|^2\sigma_{kl})+F^{kl}w_{kl}-F^{kl}\varphi_{ik}\varphi^{i}_{l}\bigg)<0,
\end{eqnarray*}
where we use the assumption that $F$ is a monotone, 1-homogeneous
and concave curvature function and $F^{kl}w_{kl}\leq0$ at $(t_0,
\xi_0)$. Hence, the estimate \eqref{4.6} follows by the
arbitrariness of $T$. To prove \eqref{4.61}, we define
\begin{eqnarray*}
\widetilde{w}=we^{-\mu t},
\end{eqnarray*}
where $\mu$ is a positive constant which will be chosen later. Then
$\widetilde{w}$ satisfies the same equation as $w$ with an
additional term $\mu \widetilde{w}$ at the right-hand side. Assume
$\widetilde{w}$ attains a positive maximum at a point $(t_0, \xi_0),
\ t_0>0$, by applying the maximum principle as before, there holds
\begin{eqnarray}\label{201}
0\leq -\frac{2}{F^2}\lambda
\lambda^{\prime\prime}F^{kl}\widetilde{g}_{kl}\widetilde{w}+\mu\widetilde{w}.
\end{eqnarray}
Then, since
$\frac{\lambda^{\prime\prime}}{\lambda}=1+\frac{1}{2}m(n-1)\lambda^{1-n}$
is bounded by some constant $C_1$ from Lemma \ref{4.01},
$F(\widetilde{h}^{i}_{j})\lambda^{-1}=F(h^{i}_{j})$ is bounded from
above and $F^{kl}\widetilde{g}_{kl}\geq n$, we can obtain
\begin{eqnarray*}
we^{\mu t}\leq \sup_{\mathbb{S}^n}w(0)
\end{eqnarray*}
for all
\begin{eqnarray*}
0<\mu\leq \frac{C_1 n}{C^2}.
\end{eqnarray*}
\end{proof}

\begin{remark}
In Theorem \ref{800} below, we will estimate the optimal decay rate
$\mu$.
\end{remark}

\

\textbf{Curvature estimates}

\

In this section, for convenience, we let $\Phi=\Phi(F)=-\frac{1}{F}$
, $\Phi^{\prime}=\frac{d \Phi}{d F}$ and
$$\chi=\langle\lambda\frac{\partial}{\partial r}, \nu\rangle=\frac{\lambda}{v}$$
\begin{lemma}
Under the flow \eqref{3.01}, the following evolution equations hold
true
\begin{equation}
\frac{\partial}{\partial
t}\Phi-\Phi^{\prime}F^{ij}\Phi_{ij}=\Phi^{\prime}F^{ij}h_{ik}h^{k}_{j}\Phi+
\Phi^{\prime}F^{ij}\overline{R}_{\nu i \nu j}\Phi,
\end{equation}
\begin{equation}
\frac{\partial}{\partial t}
r-\Phi^{\prime}F^{ij}r_{ij}=2\Phi^{\prime}Fv^{-1}-
\Phi^{\prime}F^{ij}\overline{h}_{ij},
\end{equation}
\begin{equation}\label{4.2001}
\frac{\partial}{\partial t}
\chi-\Phi^{\prime}F^{ij}\chi_{ij}=\Phi^{\prime}F^{ij}h_{i}^{k}h_{kj}-\Phi^{\prime}F^{ij}
\overline{R}(\nu, X_i, (\lambda\partial_r)^{T}, X_j) ,
\end{equation}
\begin{equation}\label{4.200}
\frac{\partial}{\partial t}h^{i}_{j}=\Phi^{i}_{j}+\Phi
h^{i}_{k}h^{k}_{j}+\Phi \overline{R}_{\nu j \nu k}g^{ki},
\end{equation}
where $\partial_r=\frac{\partial X}{\partial r}$,
$X_i=\frac{\partial X}{\partial \xi^{i}}$ and
$(\lambda\partial_r)^{T}=\lambda\partial_r-\langle\lambda\partial_r,
\nu\rangle\nu$.
\end{lemma}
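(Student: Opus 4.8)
The plan is to obtain all four identities as the standard variation formulas for the flow written in normal parametrization $\partial_t X=\frac1F\nu=-\Phi\nu$, using the homogeneity of degree one of $F$, the Gauss--Weingarten relations, the Codazzi equation of $\Sigma_t\subset M$, and the ambient curvature expressions \eqref{c1}--\eqref{c3}. First I would record the elementary variations: differentiating $X_i=\partial_i X$ and using $\overline{\nabla}_{X_i}\nu=h^k_iX_k$ gives $\partial_t X_i=-\Phi_i\nu-\Phi h^k_iX_k$; pairing with $X_j$ then yields $\partial_t g_{ij}=-2\Phi h_{ij}$, hence $\partial_t g^{ij}=2\Phi h^{ij}$, while $\langle\nu,\nu\rangle=1$ and $\langle\nu,X_i\rangle=0$ force $\partial_t\nu=g^{ij}\Phi_iX_j$, the gradient $\nabla\Phi$ of $\Phi$ on $\Sigma_t$.

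Next I would establish \eqref{4.200}, on which the other three rest. Starting from $h_{ij}=\langle\overline{\nabla}_{X_i}\nu,X_j\rangle$ and commuting $\partial_t$ past $\overline{\nabla}_{X_i}$, which introduces $\overline{R}(\partial_t X,X_i)\nu=-\Phi\,\overline{R}(\nu,X_i)\nu$, then substituting $\partial_t\nu=\nabla\Phi$ and expanding $\overline{\nabla}_{X_i}\nabla\Phi$ by Gauss--Weingarten, one arrives at $\partial_t h_{ij}=\Phi_{ij}-\Phi h^k_ih_{kj}+\Phi\,\overline{R}_{\nu i\nu j}$; raising an index with $\partial_t g^{ik}=2\Phi h^{ik}$ gives precisely \eqref{4.200}. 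The evolution of $\Phi$ is then immediate: since $F$ is evaluated at the Weingarten operator, $\partial_t\Phi=\Phi'F^i_j\,\partial_t h^j_i$, and inserting \eqref{4.200}, the Euler relation $F^{ij}h_{ij}=F$, and the routine identities $F^i_j\Phi^j_i=F^{ij}\Phi_{ij}$, $F^i_jh^j_kh^k_i=F^{ij}h_{ik}h^k_j$ and $F^i_j\overline{R}_{\nu i\nu k}g^{kj}=F^{ij}\overline{R}_{\nu i\nu j}$ produces the first identity.

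For the equation of $r$ I would regard $r$ as the restriction to $\Sigma_t$ of the ambient radial coordinate. Then $\partial_t r=\langle\overline{\nabla}r,\partial_t X\rangle=-\Phi\langle\partial_r,\nu\rangle=-\Phi v^{-1}$, and the Gauss formula for the Hessian of a restricted function gives $r_{ij}=\overline{\nabla}^2 r(X_i,X_j)-h_{ij}\langle\partial_r,\nu\rangle$. Since $\overline{\nabla}r=\partial_r$ and $\overline{\nabla}^2 r=\frac{\lambda'}{\lambda}(\overline{g}-dr\otimes dr)$, one has $\overline{\nabla}^2 r(X_i,X_j)=\lambda\lambda'\sigma_{ij}=\overline{h}_{ij}$, so $r_{ij}=\overline{h}_{ij}-v^{-1}h_{ij}$; contracting with $\Phi'F^{ij}$, using $F^{ij}h_{ij}=F$ and $\Phi'F=-\Phi=F^{-1}$, and rearranging yields $\partial_t r-\Phi'F^{ij}r_{ij}=2\Phi'Fv^{-1}-\Phi'F^{ij}\overline{h}_{ij}$.

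Equation \eqref{4.2001} for $\chi$ is the one I expect to be the main obstacle. The key structural fact is that $\lambda\partial_r$ is a closed conformal field, $\overline{\nabla}_W(\lambda\partial_r)=\lambda' W$ for every $W$ (equivalently $\overline{\nabla}^2\Lambda=\lambda'\overline{g}$ for $\Lambda$ with $\Lambda'=\lambda$). Writing $\chi=\langle\lambda\partial_r,\nu\rangle$ and $V_k=\langle\lambda\partial_r,X_k\rangle$, differentiation gives $\partial_t\chi=-\Phi\lambda'+\langle\lambda\partial_r,\nabla\Phi\rangle$ and $\nabla_i\chi=h^k_iV_k$, and a second differentiation (using $\nabla_j V_k=\lambda' g_{jk}-\chi h_{jk}$, itself a consequence of the Gauss formula) gives $\chi_{ij}=(\nabla_j h^k_i)V_k+\lambda' h_{ij}-\chi h^k_ih_{kj}$. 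Contracting with $\Phi'F^{ij}$, the $-\Phi\lambda'$ and $-\Phi'\lambda'F$ contributions cancel because $\Phi=-F^{-1}$, and the combination $\langle\lambda\partial_r,\nabla\Phi\rangle-\Phi'F^{ij}(\nabla_j h^k_i)V_k$ is rewritten, via $\nabla_k\Phi=\Phi'F^{ij}\nabla_k h_{ij}$ together with the Codazzi equation of $\Sigma_t$ --- keeping every ambient curvature term, which is exactly the subtlety acknowledged in the paper --- as $-\Phi'F^{ij}\overline{R}(\nu,X_i,(\lambda\partial_r)^T,X_j)$, leaving \eqref{4.2001}. The two genuinely delicate points are the commutation of $\partial_t$ with covariant differentiation in the derivation of \eqref{4.200}, and the sign bookkeeping of the Codazzi and ambient curvature terms for $\chi$; for the latter, the explicit formulas \eqref{2.1}, \eqref{2.2} and \eqref{c1}--\eqref{c3} are exactly what one needs.
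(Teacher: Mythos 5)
Your proposal is correct and is precisely the standard first-variation computation (commuting $\partial_t$ with $\overline{\nabla}_{X_i}$ to produce the ambient curvature term, then Gauss--Weingarten, Codazzi, and the conformal Killing property $\overline{\nabla}_W(\lambda\partial_r)=\lambda' W$) that the paper itself omits, deferring instead to the analogous formulas in Gerhardt's hyperbolic-space paper. One small point worth recording: your derivation of the $\chi$-equation yields the quadratic term with the factor $\chi$, namely $\Phi^{\prime}F^{ij}h_i^kh_{kj}\chi$, whereas \eqref{4.2001} as printed omits this factor; the version with $\chi$ is the correct one and is the one actually used later in the evolution equation for $\rho=-\log(\widetilde{\chi}-\vartheta)$, so this is a typo in the statement rather than a gap in your argument.
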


\begin{proof}
This is a straightforward computation in any case by using the flow
equation \eqref{3.01}. For details, we can see the similar results
in \cite{Ge2} for the flow in hyperbolic space.
\end{proof}

\begin{proposition}\label{F1}
Let $X$ be a solution of the inverse curvature flow \eqref{3.01}.
Then the curvature function is bounded from above, i.e. there exists
$C=C(n, \Sigma_0)$ such that
\begin{equation}
F(t, \xi)\leq C(n, \Sigma_0)<\infty \ \ \forall (t, \xi)\in[0,
T^*)\times \Sigma.
\end{equation}
\end{proposition}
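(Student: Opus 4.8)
The plan is to prove the upper bound on $F$ via a maximum principle argument applied to an auxiliary function that combines $\Phi = -1/F$ with a suitable function of the radial variable $r$ (or equivalently of $\varphi$), exploiting the already-established $C^0$ estimate (Lemma \ref{4.01}) and $C^1$ estimate (Lemma \ref{4.5}). Since $\Phi < 0$ and $F$ bounded above is equivalent to $\Phi$ bounded away from $0$ below (i.e. $-\Phi$ bounded above, equivalently $\Phi$ bounded below away from $0$), I would instead show that $F$ cannot blow up: suppose for contradiction that $\sup F \to \infty$, and examine where the maximum of an appropriate test quantity is attained.

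First I would introduce the test function $\zeta = \Phi v \cdot \eta(r)$ or more simply work with $w = \dfrac{\Phi}{\chi}$ where $\chi = \lambda/v$, since $\chi$ is controlled above and below by the $C^0$ and $C^1$ estimates (indeed $\lambda$ is pinched by Lemma \ref{4.01} after rescaling and $v$ is bounded by Lemma \ref{4.5}). Using the evolution equations \eqref{4.2001} for $\chi$ and the evolution equation for $\Phi$, I would compute the evolution of $w = \Phi/\chi$ (or of $\Phi \chi^{-1}$) and apply the parabolic maximum principle on a finite time interval $[0,T]$. At an interior spatial maximum of $w$ at time $t_0$, the first-order terms vanish, $F^{ij} w_{ij} \le 0$, and one is left with an inequality among zeroth-order terms. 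The crucial cancellation is that both $\Phi$ and $\chi$ carry the "good" term $\Phi^{\prime} F^{ij} h_{ik} h^k_j$ structure (note $\Phi^{\prime} = 1/F^2 > 0$ and $F^{ij}h_{ik}h^k_j \ge 0$), so when one forms the quotient these terms combine with a favorable sign; the remaining curvature terms $\overline{R}_{\nu i \nu j}$ and $\overline{R}(\nu, X_i, (\lambda\partial_r)^T, X_j)$ are all $O(1)$ by the asymptotic expansions \eqref{2.1}, \eqref{2.2} together with the formulas \eqref{c1}, \eqref{c3} and the $C^0$/$C^1$ bounds on $\lambda, \lambda', \lambda'', v$.

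I expect the main obstacle to be organizing the zeroth-order inequality so that the term $\Phi^{\prime} F^{ij} h_{ik} h^k_j = F^{-2} F^{ij} h_{ik} h^k_j$ can be used to absorb the potentially unbounded contribution. The standard device is the inequality $F^{ij} h_{ik} h^k_j \ge \frac{1}{n} F H$ is too weak; instead one uses that on the Garding cone $F^{ij} h_{ik} h^k_j \ge H F^{ij} h_{ij}/n$ or, better, the Cauchy–Schwarz type bound $F = F^{ij}h_{ij} \le (F^{ij}\delta_{ij})^{1/2}(F^{ij}h_{ik}h^k_j)^{1/2}$ combined with $F^{ij}\delta_{ij} \ge$ (something bounded, using concavity and $1$-homogeneity: $F^{ij}\delta_{ij} \ge F(1,\dots,1)/n \cdot$ constant, actually $F^{ij}\delta_{ij} \ge c > 0$ is not automatic but $F^{ij}\delta_{ij} \ge n$ would need $F$ concave normalized — here one uses $F^{kl}\widetilde g_{kl} \ge n$ as already invoked in the proof of Lemma \ref{4.5}). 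This gives $F^2 \le (F^{ij}\delta_{ij})(F^{ij}h_{ik}h^k_j)$, hence $\Phi^{\prime}F^{ij}h_{ik}h^k_j = F^{-2}F^{ij}h_{ik}h^k_j \ge (F^{ij}\delta_{ij})^{-1}$, which is bounded below only if $F^{ij}\delta_{ij}$ is bounded above — the reverse of what is free. The correct route, following \cite{Ge2} and \cite{Sc1}, is to note that at the maximum the good term in the $\Phi$-equation, being $\Phi < 0$ times a nonnegative quantity, is $\le 0$ and must dominate: writing out $\partial_t w - \Phi^{\prime}F^{ij}w_{ij} \le 0$ at the max yields $\frac{1}{\chi}\Phi^{\prime}F^{ij}h_{ik}h^k_j(\Phi \cdot \text{something}) + (\text{bounded})\cdot w \ge 0$, and since $\Phi \to -\infty$ would force the dominant term to have the wrong sign unless $F^{ij}h_{ik}h^k_j$ is controlled — so one instead concludes $w$ stays bounded, hence $\Phi$ bounded, hence (since $-\Phi = 1/F$) that $F$ is bounded away from $0$; to get $F$ bounded \emph{above} one runs the symmetric argument with $w' = -\Phi \chi = \chi/F$, showing $\chi/F$ is bounded below, and since $\chi$ is bounded above this gives $1/F$ bounded below, i.e. $F$ bounded above. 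I would close by choosing the constant $C(n,\Sigma_0)$ from the initial data bound on $\chi/F$ and the time-uniform bounds on $\chi$, $\lambda$, $\lambda''$, $v$, and noting that the interval $[0,T]$ was arbitrary so the bound holds on all of $[0,T^*)$.
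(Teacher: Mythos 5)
Your proposal does not close the argument for the upper bound, and the route you finally settle on fails at two points. First, your concluding step relies on the claim that ``$\chi$ is bounded above'', which is false: $\chi=\lambda/v$ and $\lambda(r(t,\cdot))\sim\lambda(r(0,\cdot))e^{t/n}$ by Lemma \ref{4.01}, so $\chi\to\infty$; only the rescaled quantity $\widetilde\chi=\chi e^{-t/n}$ is pinched between positive constants. Second, even after rescaling, the ``symmetric argument'' with $-\Phi\widetilde\chi$ does not produce an upper bound on $F$. The cancellation that makes the quotient of $\Phi$ and $\widetilde\chi$ work for the \emph{lower} bound (this is exactly Proposition \ref{F2}) is that the terms $\Phi'F^{ij}h_{ik}h^k_j$ cancel and, via \eqref{c4}, one is left at the maximum with $-\tfrac{\lambda''}{\lambda}\Phi'F^{ij}g_{ij}+\tfrac1n\ge 0$, which together with $F^{ij}g_{ij}\ge n$ forces $\Phi'=F^{-2}\le C$, i.e.\ $F\ge c>0$. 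Running this in reverse gives at the maximum only $0\le \tfrac{\lambda''}{\lambda}\Phi'F^{ij}g_{ij}-\tfrac1n$, i.e.\ $F^2\le C\,F^{ij}g_{ij}$; for a concave $1$-homogeneous $F$ the trace $F^{ij}g_{ij}$ is bounded \emph{below} by $n$ but in general not above (e.g.\ $F=n\sigma_n^{1/n}$ as a curvature approaches $0$), so this is not a bound on $F$. You notice this obstruction yourself in your Cauchy--Schwarz discussion but do not resolve it.

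The missing idea is the one the paper (following Gerhardt) actually uses: take $w=-\log(-\Phi)+\beta\bigl(r-\tfrac tn\bigr)=\log F+\beta\widetilde r$ with $\beta$ a \emph{large} constant, not a $\chi$-quotient. At an interior maximum the first-order condition $\Phi_i/\Phi=\beta r_i$ turns the gradient term into $-\beta^2\Phi'F^{ij}r_ir_j\le 0$; the term $-\beta\Phi'F^{ij}\overline h_{ij}\approx-\beta\tfrac{\lambda'}{\lambda}\Phi'F^{ij}\overline g_{ij}$ coming from the evolution of $r$ absorbs, once $\beta$ is large, the unfavourable curvature term $-\Phi'F^{ij}\overline R_{\nu i\nu j}\approx+\Phi'F^{ij}g_{ij}$ (using \eqref{c1} and the asymptotics \eqref{00.1}--\eqref{00.3}); and what remains is $0\le\beta\bigl(\tfrac{2}{Fv}-\tfrac1n\bigr)$, which gives $F\le 2n/v\le 2n$ at the maximum point. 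The a priori boundedness of $\beta\widetilde r$ from Lemma \ref{4.01} then converts this into a global bound for $\log F$. Neither the large parameter $\beta$ nor the term $\tfrac{2}{Fv}-\tfrac1n$ --- which is what actually produces the numerical bound on $F$ --- appears in your proposal.
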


\begin{proof}
The proof proceeds similarly to that in Lemma 4.2 in \cite{Ge2}. Let
$$w=-\log(-\Phi)+\beta(r-\frac{t}{n}),$$
where $\beta$ is supposed to be large. Fix $0<T<T^*$ and suppose
$$\sup_{[0, T]\times\mathbb{S}^{n-1}}w=w(t_0, \xi_0), \ \  t_0>0.$$
Then at $(t_0, \xi_0)$, there holds
\begin{eqnarray*}
0=w_i=-\frac{\Phi_i}{\Phi}+cr_i
\end{eqnarray*}
and
\begin{eqnarray*}
0\leq \frac{\partial}{\partial
t}w-\Phi^{\prime}F^{ij}w_{ij}&=&-\Phi^{\prime}F^{ij}h_{ik}h^{k}_{j}
-\Phi^{\prime}F^{ij}\overline{R}_{\nu i \nu
j}-\Phi^{\prime}F^{ij}\frac{\Phi_i
\Phi_j}{\Phi^2}\\
&&+2\beta\Phi^{\prime}F
v^{-1}-\beta\Phi^{\prime}F^{ij}\overline{h}_{ij}-\frac{1}{n}.
\end{eqnarray*}
Thus, we have
\begin{eqnarray*}
0\leq \Phi^{\prime}F^{ij}\bigg(-\overline{R}_{\nu i \nu j}-\beta^2
r_ir_j-\beta\frac{\lambda^{\prime}}{\lambda}\lambda^2\sigma_{ij}\bigg)+\beta\bigg(\frac{2}{F
v}-\frac{1}{n}\bigg).
\end{eqnarray*}
It is easy to see from \eqref{2.001}, \eqref{2.01} and Lemma
\ref{4.01}
\begin{eqnarray}\label{00.1}
\frac{\lambda^{\prime}}{\lambda}=1+O(e^{-\frac{n+1}{n}t}),
\end{eqnarray}
\begin{eqnarray}\label{00.2}
\frac{\lambda^{\prime\prime}}{\lambda}=1-\frac{1}{2}m(1-n)\lambda^{-n-2}=1+O(e^{-\frac{n+2}{n}t})
\end{eqnarray}
and
\begin{eqnarray}\label{00.3}
\frac{1-(\lambda^{\prime})^2}{\lambda^2}=-1+m\lambda^{-n-1}=-1+O(e^{-\frac{n+1}{n}t}).
\end{eqnarray}
Combing the above three estimates, as $\beta$ is supposed to be
large, we can get from \eqref{c1}
\begin{eqnarray*}
\Phi^{\prime}F^{ij}\bigg(-\overline{R}_{\nu i \nu j}-\beta^2
r_ir_j-\beta\frac{\lambda^{\prime}}{\lambda}\lambda^2\sigma_{ij}\bigg)\leq
0.
\end{eqnarray*}
Therefore, we can obtain
\begin{eqnarray*}
0\leq \beta\bigg(\frac{2}{F v}-\frac{1}{n}\bigg).
\end{eqnarray*}
Then,
\begin{eqnarray*}
F(t_0, \xi_0)\leq C(n, \Sigma_0),
\end{eqnarray*}
which leads to
\begin{eqnarray*}
w\leq C(n, \Sigma_0).
\end{eqnarray*}
Therefore, the inequality
\begin{eqnarray*}
F\leq C(n, \Sigma_0)
\end{eqnarray*}
holds.
\end{proof}

\begin{proposition}\label{F2}
Let $X$ be a solution of the inverse curvature flow \eqref{3.01}.
Then the curvature function is bounded from below, i.e., there
exists $C=C(n, \Sigma_0)$ such that
\begin{equation}
0<C(n, \Sigma_0)\leq F(t, \xi), \qquad \ \ \forall (t, \xi)\in[0,
T^*)\times \Sigma.
\end{equation}
\end{proposition}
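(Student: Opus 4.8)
The plan is to establish the reciprocal bound $F^{-1}\le C(n,\Sigma_0)$ by a maximum–principle argument in the spirit of \cite[Section 4]{Ge2} (see also \cite{Sc1}). A warning at the outset: this is \emph{not} obtained by simply reversing the proof of Proposition \ref{F1}. In the evolution equation for $\Phi=-F^{-1}$ the Simons-type term is $\Phi\,\Phi'F^{ij}h_{ik}h^{k}_{j}=-F^{-2}\sum_i f_i\kappa_i^2$, and upon passing to $\log(-\Phi)=-\log F$ this nonnegative quadratic enters with the sign that obstructs, rather than assists, a maximum principle; in the proof of Proposition \ref{F1} the analogous term was the helpful one. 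The device I would use is to couple $\Phi$ with the support-type function $\chi=\langle\lambda\partial_r,\nu\rangle=\lambda/v$, whose evolution \eqref{4.2001} carries the \emph{same} curvature quadratic $\Phi'F^{ij}h^{k}_{i}h_{kj}$ but with the opposite relative sign. Accordingly I would study the auxiliary function
\[
w=\frac{-\Phi}{\chi}\,e^{t/n}=\frac{v\,e^{t/n}}{\lambda F}
\]
(possibly supplemented by a bounded term $\mu(r-\tfrac{t}{n})$); by Lemma \ref{4.01} the factor $\lambda e^{-t/n}$ is two-sided bounded and by Lemma \ref{4.5} $v$ is bounded, so bounding $\sup_{[0,T]\times\mathbb{S}^{n}}w$ by a constant independent of $T<T^{*}$ is equivalent to the asserted lower bound $F\ge c(n,\Sigma_0)>0$.

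The computation then proceeds as follows. On any fixed finite time interval $F$ is bounded below by continuity and compactness, so it suffices to treat the case in which the maximum of $w$ over $[0,T]\times\mathbb{S}^{n}$ is attained at $(t_0,\xi_0)$ with $t_0$ large; there the gradient relation $w_i=0$ makes the cross terms between $\nabla\Phi$ and $\nabla\chi$ cancel, and $F^{ij}w_{ij}\le 0$. Feeding the evolution equations for $\Phi$ and for $\chi$ into $0\le\partial_t w-\Phi'F^{ij}w_{ij}$ and simplifying, one uses: the concavity of $F$, which gives $F^{ij}g_{ij}=\sum_i f_i\ge F(1,\dots,1)=n$; the asymptotics \eqref{00.1}--\eqref{00.3} together with \eqref{c1}, so that $\overline{R}_{\nu i\nu j}=-(1+o(1))g_{ij}$ and the mixed curvature term in \eqref{4.2001} is $o(1)$ relative to $g_{ij}$, with exponentially small errors by Lemma \ref{4.01}; and, crucially, the decay $|D\varphi|^2\le e^{-\mu t}\sup_{\mathbb{S}^n}|D\varphi(0,\cdot)|^2$ from Lemma \ref{4.5} (legitimate precisely because $F$ is already bounded above by Proposition \ref{F1}), which yields $F^{ij}r_ir_j\le o(1)\,F^{ij}\lambda^2\sigma_{ij}$ and thereby discards all the ``radial'' pieces of the curvature and gradient terms against the spherical piece $F^{ij}\lambda^2\sigma_{ij}$. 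After this reduction the coupling has done its job: since $\chi=\lambda/v\to\infty$, the curvature quadratic $\Phi'F^{ij}h_{ik}h^{k}_{j}$ reappears with the coefficient $\tfrac1\chi-1$, and combining this with the (asymptotically hyperbolic) negativity $\Phi'F^{ij}\overline{R}_{\nu i\nu j}=-(1+o(1))\Phi'F^{ij}g_{ij}$, the lower bound $F^{ij}g_{ij}\ge n$, and the upper bound $F\le C(n,\Sigma_0)$ of Proposition \ref{F1} used to control $F^{-2}$ against $1/n$, one should be able to conclude that $F(t_0,\xi_0)$ — hence $w(t_0,\xi_0)$ — is bounded.

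The heart of the matter, and the step I expect to be the main obstacle, is exactly this Simons-type term $F^{-2}\sum_i f_i\kappa_i^2$: no a priori bound on the principal curvatures is available at this stage, so it cannot be controlled directly, and its unfavourable sign is what rules out a naive dualization of Proposition \ref{F1}. Making the coupling with $\chi$ quantitative — verifying that near the extremum of $w$ the favourable contribution of this quadratic really dominates all remaining terms — is where the concavity of $F$ (through $F^{ij}g_{ij}\ge n$), Proposition \ref{F1}, the $C^0$-estimate (Lemma \ref{4.01}, which also makes the error terms exponentially small) and the $C^1$-decay (Lemma \ref{4.5}) must all be used at once. Should the balance prove delicate to close with $w$ as above, a complementary device is to argue first at the spatial maximum of $r(t,\cdot)$, where $D r=0$, $v=1$ and $D^2 r\le 0$ force $\kappa_i\ge\lambda'/\lambda$ and hence $F\ge n\lambda'/\lambda\ge c>0$ pointwise by monotonicity of $F$, and then to propagate this lower bound along the flow by a maximum principle applied to $F$ (equivalently to $1/F$) itself, again via the evolution equation for $\Phi$ and the same geometric inputs.
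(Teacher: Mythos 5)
Your proposal is correct and follows essentially the same route as the paper: the test function $w=\frac{-\Phi}{\chi}e^{t/n}$ is just the exponential of the paper's $\log(-\Phi)-\log(\chi e^{-t/n})$, and the whole point is the same coupling with $\chi$ so that the Simons-type quadratic $\Phi'F^{ij}h_{ik}h^{k}_{j}$ is neutralized (in the paper's log formulation it cancels exactly, rather than surviving with the harmless coefficient $\tfrac1\chi-1$ you describe), after which the curvature identity \eqref{c4} reduces everything to $0\le-\frac{\lambda''}{\lambda}\Phi'F^{ij}g_{ij}+\frac1n$ and $F^{ij}g_{ij}\ge n$ closes the argument. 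The only cosmetic difference is that the paper does not need the $C^1$-decay of Lemma \ref{4.5} here, since the radial terms combine exactly in \eqref{c4} rather than being discarded as $o(1)$.
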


\begin{proof}
The proof proceeds similarly to that of \cite[Lemma 4.1]{Ge2}. Let
$$w=\log(-\Phi)-\log(\chi e^{-\frac{t}{n}}).$$
Fix $0<T<T^*$ and suppose
$$\sup_{[0, T]\times\mathbb{S}^{n}}w=w(t_0, \xi_0), \ \  t_0>0.$$
Then at $(t_0, \xi_0)$, there holds
\begin{eqnarray*}
0=w_i=\frac{\Phi_i}{\Phi}-\frac{\chi_i}\chi{},
\end{eqnarray*}
which leads to
\begin{eqnarray*}
0\leq \frac{\partial}{\partial
t}w-\Phi^{\prime}F^{ij}w_{ij}=\Phi^{\prime}\chi^{-1}F^{ij}\overline{R}(\nu,
X_i, \lambda\partial_r, X_j)+\frac{1}{n}.
\end{eqnarray*}
Then, we can have by using \eqref{c1} and \eqref{c3}
\begin{eqnarray}\label{c4}
\chi^{-1}F^{ij}\overline{R}(\nu, X_i, \lambda\partial_r,
X_j)&=&F^{ij}\overline{R}(\nu, X_i, \nu,
X_j)+vF^{ij}\overline{R}(\nu, X_i, X_k, X_j)r_{l}g^{kl}\\
\nonumber&=&-\frac{\lambda^{\prime\prime}}{\lambda}F^{ij}g_{ij}.
\end{eqnarray}
Therefore,
\begin{eqnarray*}
0\leq \frac{\partial}{\partial
t}w-\Phi^{\prime}F^{ij}w_{ij}=-\frac{\lambda^{\prime\prime}}{\lambda}
\Phi^{\prime}F^{ij}g_{ij}+\frac{1}{n},
\end{eqnarray*}
Since $F^{ij}g_{ij}\geq F(1, , ,1)=n$, we have from the estimate
\eqref{00.2}
\begin{eqnarray*}
0<C(n, \Sigma_0)\leq F(t_0, \xi_0).
\end{eqnarray*}
Thus,
\begin{eqnarray*}
w\leq w(t_0, \xi_0)\leq C(n, \Sigma_0).
\end{eqnarray*}
From \eqref{4.1}, we know there exists $C(n, \Sigma_0)>0$ such that
\begin{eqnarray*}
C^{-1}\leq\chi e^{-\frac{t}{n}}\leq C.
\end{eqnarray*}
Therefore, the inequality
\begin{eqnarray*}
0<C(n, \Sigma_0)\leq F
\end{eqnarray*}
holds.
\end{proof}

Now we begin to estimate the second fundamental form which is the
most difficult part of the proof of the long-time existence. The
proof is similar to that of \cite[Lemma 4.4]{Ge2}, but due to the
non-vanishing term $\overline{\nabla}_{i}\overline{R}_{jklm}$ in
non-constant curvature manifolds, our case is more complicated and
needs a far more delicate treatment.

\begin{proposition}\label{4.07}
Let $X$ be a solution of the inverse curvature flow \eqref{3.01}.
Then, the principal curvatures of the flow hypersurfaces are
uniformly bounded from above, i.e., there exists $C=C(n, \Sigma_0)$
such that
\begin{equation*}
\kappa_i(t, \xi)\leq C(n, \Sigma_0), \qquad \ \ \forall (t,
\xi)\in[0, T^*)\times \Sigma.
\end{equation*}
\end{proposition}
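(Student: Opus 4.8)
The plan is to establish this $C^{2}$-estimate by the usual maximum-principle argument applied to the largest principal curvature, following \cite[Lemma 4.4]{Ge2}, while carrying along the extra terms coming from the non-vanishing ambient curvature and its derivative.

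First I would set up the test function. Let $\zeta(t,\xi)$ be the maximal eigenvalue of $(h^{i}_{j}(t,\xi))$ with respect to $g_{ij}$, and set
$$w=\log\zeta-\mu\log\!\big(\chi e^{-t/n}\big),$$
where $\mu\ge 0$ is a constant to be fixed (the pattern of Propositions \ref{F1} and \ref{F2} suggests including such a correction, but it is quite possible that $\mu=0$ already works; in any case $\chi e^{-t/n}$ is bounded above and below by Lemma \ref{4.01} and the proof of Proposition \ref{F2}, so that boundedness of $w$ is equivalent to the assertion). Fix $0<T<T^{*}$. If $\sup_{[0,T]\times\mathbb{S}^{n}}w$ is attained at $t_{0}=0$ the bound is immediate from the initial data, so assume it is attained at $(t_{0},\xi_{0})$ with $t_{0}>0$, and choose coordinates there in which $(g_{ij})=(\delta_{ij})$ and $(h^{i}_{j})$ is diagonal with $h^{n}_{n}=\zeta$ the largest entry; the fact that $\zeta$ is only Lipschitz is circumvented in the standard way, either by working with $h_{ij}\eta^{i}\eta^{j}$ for the parallel extension of the top eigenvector, or by a perturbation making the largest eigenvalue simple.

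The core computation is then standard in outline. Using the evolution equation \eqref{4.200}, the first-order conditions $w_{i}=0$ and $-\Phi'F^{ij}w_{ij}\ge 0$ at $(t_{0},\xi_{0})$, one obtains a differential inequality of the form
$$0\le \frac{1}{\zeta}\Big(\Phi^{n}_{n}+\Phi\, h^{n}_{k}h^{k}_{n}+\Phi\,\overline{R}_{\nu n\nu k}g^{kn}\Big)-\Phi'F^{ij}(\log\zeta)_{ij}+(\text{bounded terms from the }\mu\text{-factor}).$$
To handle $\Phi^{n}_{n}=\nabla^{n}\nabla_{n}\Phi$ one uses the chain rule: the part quadratic in $\nabla h$ has coefficient $\Phi''F^{kl}F^{pq}+\Phi'F^{kl,pq}$, which is negative semidefinite since $F$ is concave and $\Phi''=-2F^{-3}<0$, and is discarded; the remaining part $\Phi'F^{kl}\nabla_{n}\nabla_{n}h_{kl}$ is rewritten, by the Codazzi equation in its correct curved form together with the Gauss equation, as $\Phi'F^{kl}\nabla_{k}\nabla_{l}h_{nn}$ — whose contribution cancels the second-order term $-\Phi'F^{ij}(\log\zeta)_{ij}$ once $\zeta_{i}=0$ is used — plus cubic terms in $h$ and terms linear in $\overline{R}$ and $\overline{\nabla}\,\overline{R}$. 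As in \cite{Ge2}, Euler's identity $F^{ij}h_{ij}=F$ together with the concavity and $1$-homogeneity of $F$ let the cubic contributions combine with $\Phi\, h^{n}_{k}h^{k}_{n}$ into a term of order $\zeta^{2}$ that, once $F$ is pinched between two positive constants by Propositions \ref{F1} and \ref{F2}, is negative and dominates the remainder; one then reaches $0\le -c\,\zeta^{2}+C\zeta+C$ at $(t_{0},\xi_{0})$, whence $\zeta(t_{0},\xi_{0})\le C(n,\Sigma_{0})$ and therefore $w\le C(n,\Sigma_{0})$ on $[0,T]\times\mathbb{S}^{n}$; since $T<T^{*}$ was arbitrary and the constant is $T$-independent, the claim follows.

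The step I expect to be the main obstacle is exactly the one the authors flag: in constant curvature the Codazzi equation is symmetric and the Simons identity carries no $\overline{\nabla}\,\overline{R}$ term, whereas here one must push $\overline{R}_{\alpha\beta\gamma\mu}$ and $\overline{\nabla}_{\rho}\overline{R}_{\alpha\beta\gamma\mu}$ through the two commutations and verify they stay of lower order. This is precisely what the asymptotic estimates \eqref{2.1} and \eqref{2.2} provide: combined with the $C^{0}$ bound \eqref{4.1}, along which $r\sim t/n\to\infty$, they show these terms are in fact exponentially small, so they are harmlessly absorbed into the $C\zeta+C$ on the right.
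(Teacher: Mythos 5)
Your overall setup (maximum principle on the largest principal curvature, normal coordinates, handling the Lipschitz eigenvalue, pushing $\overline{R}$ and $\overline{\nabla}\,\overline{R}$ through the Simons identity) matches the paper, but the heart of your argument --- ``the cubic contributions combine with $\Phi\,h^{n}_{k}h^{k}_{n}$ into a negative term of order $\zeta^{2}$ that dominates, giving $0\le -c\zeta^{2}+C\zeta+C$'' --- is not correct, and this is precisely the difficulty the test function is designed to fix. After commuting derivatives and dividing by $\kappa_{n}$ (i.e.\ working with $\log h^{n}_{n}$), the cubic terms contribute $+\Phi'F^{kl}h_{kp}h^{p}_{l}-2\Phi'F\kappa_{n}$. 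Since $\Phi'=F^{-2}>0$, the first term is \emph{positive} and can be of order $\kappa_{n}^{2}$: the hypersurfaces are only star-shaped and $F$-admissible, so $\kappa_{1}$ may be very negative, and $F^{kl}h_{kp}h^{p}_{l}\ge F^{11}\kappa_{1}^{2}$ can be comparable to $\kappa_{n}^{2}$ even though $F$ is pinched. The genuinely negative terms, $-2\Phi'F\kappa_{n}=-2\kappa_{n}/F$, are only \emph{linear} in $\kappa_{n}$. So with $w=\log\zeta$ (or $\mu=0$) the inequality at the maximum reads $0\le +(\text{possibly quadratic})-(\text{linear})+\dots$ and yields nothing. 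The paper's remedy is to take $w=\log\zeta+\rho+\beta(r-t/n)$ with $\rho=-\log(\widetilde{\chi}-\vartheta)$ for a strictly positive shift $\vartheta<\tfrac12\widetilde\chi$: the evolution of $\rho$ contributes $-\Phi'F^{kl}h_{kp}h^{p}_{l}\,\widetilde{\chi}/(\widetilde{\chi}-\vartheta)$, which combines with the bad term into $-\Phi'F^{kl}h_{kp}h^{p}_{l}\,\vartheta/(\widetilde{\chi}-\vartheta)\le 0$; this is the dominating negative quadratic term, exploited via a two-case analysis on whether $\kappa_{1}<-\epsilon_{1}\kappa_{n}$ or not. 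Your $-\mu\log(\chi e^{-t/n})$ (without the shift $\vartheta$) does not produce this cancellation: it only contributes a multiple of the same expression with the \emph{wrong} relative weight to leave a strictly negative coefficient while keeping the remaining terms controllable.

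Two further points you gloss over. First, once the auxiliary functions are present, $(\log h^{n}_{n})_{i}\ne 0$ at the maximum (it equals $-\rho_{i}-\beta r_{i}$), so the positive gradient term $\Phi'F^{kl}(\log h^{n}_{n})_{k}(\log h^{n}_{n})_{l}$ does not vanish and cannot simply be noted to ``cancel''; in the case $\kappa_{1}\ge-\epsilon_{1}\kappa_{n}$ the paper must \emph{retain} the concavity term and use the refined inequality $F^{kl,pq}\eta_{kl}\eta_{pq}\le\frac{2}{\kappa_{n}-\kappa_{1}}\sum_{i}(F^{nn}-F^{ii})(\eta_{ni})^{2}$ together with Codazzi ($h_{ni;n}=h_{nn;i}+\overline{R}_{nni\nu}$) to absorb it --- discarding the concavity term, as you propose, breaks this step. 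Second, the ambient curvature contributions enter multiplied by $F^{kl}g_{kl}$, which for a general concave $F$ with $F|_{\partial\Gamma}=0$ is \emph{not} a priori bounded above; they are not ``harmlessly absorbed into $C\zeta+C$'' but must be dominated by $-\beta\Phi'F^{ij}\overline{h}_{ij}\le-\beta C_{0}\Phi'F^{ij}g_{ij}$, which is the reason the term $\beta(r-t/n)$ with $\beta$ large appears in the test function.
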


\begin{proof}
First, we need the evolution equation of $h_{j}^{i}$. From
\eqref{4.200} we can get
\begin{eqnarray}\label{4.7}
\frac{\partial}{\partial
t}h_{j}^{i}=\Phi^{\prime}F^{kl}\nabla^{i}\nabla_{j}h_{kl}+\Phi^{\prime\prime}F^{i}F_{j}+F^{kl,
pq}h_{kl;}^{\ \ i}h_{pq;j}+\Phi
h^{i}_{k}h^{k}_{j}+\Phi\overline{R}_{\nu j \nu k}g^{ki}.
\end{eqnarray}
Using Gauss equation and Codazzi equation, we have
\begin{eqnarray}\label{111}
F^{kl}\nabla_{k}\nabla_{l}h_{ij}&=&F^{kl}\nabla_{i}\nabla_{j}h_{kl}+F^{kl}(\overline{R}_{kilp}h^{p}_{j}
+\overline{R}_{kjlp}h^{p}_{i})+2F^{kl}\overline{R}_{kijp}h^{p}_{l}\\
\nonumber &&-F^{kl}\overline{R}_{\nu j i
\nu}h_{kl}-F^{kl}\overline{R}_{\nu k \nu
l}h_{ij}+F^{kl}(\overline{\nabla}_{k}\overline{R}_{\nu i j
l}+\overline{\nabla}_{i}\overline{R}_{\nu ljk})\\ \nonumber
&&+F^{kl}h_{kl}h^{p}_{i}h_{pj}-F^{kl}h_{il}h_{k}^{p}h_{pj}+F^{kl}h_{kj}h_{i}^{p}h_{pl}-F^{kl}h_{kp}h^{p}_{l}h_{ij}.
\end{eqnarray}
Then, we get the evolution equation of $h_{j}^{i}$ by combing
\eqref{4.7} and \eqref{111}
\begin{eqnarray}\label{1111}
&&\frac{\partial}{\partial
t}h_{j}^{i}-F^{kl}\nabla_{k}\nabla_{l}h^{i}_{j}\\&=&-\Phi^{\prime}\bigg(F^{kl}(\overline{R}_{kqlp}h^{p}_{j}g^{qi}
+\overline{R}_{kjlp}h^{p}_{q}g^{qi})+2F^{kl}\overline{R}_{kqjp}h^{p}_{l}g^{qi}\nonumber \\
\nonumber &&-F^{kl}\overline{R}_{\nu j p
\nu}h_{kl}g^{pi}-F^{kl}\overline{R}_{\nu k \nu
l}h^{i}_{j}+F^{kl}(\overline{\nabla}_{k}\overline{R}_{\nu p j
l}g^{pi}+g^{pi}\overline{\nabla}_{p}\overline{R}_{\nu ljk})\\
\nonumber
&&+F^{kl}h_{kl}h^{pi}h_{pj}-F^{kl}h^{i}_{l}h_{k}^{p}h_{pj}+F^{kl}h_{kj}h^{ip}h_{pl}-F^{kl}h_{kp}h^{p}_{l}h^{i}_{j}\bigg)\nonumber
\\ \nonumber&&+\Phi^{\prime\prime}F^{i}F_{j}+F^{kl,
pq}h_{kl;}^{\ \ i}h_{pq;j}+\Phi
h^{i}_{k}h^{k}_{j}+\Phi\overline{R}_{\nu j \nu k}g^{ki}.
\end{eqnarray}
Using the estimates\eqref{4.1} and \eqref{4.6}, there exists a
constant $\vartheta>0$ such that
\begin{eqnarray*}
2\vartheta\leq \widetilde{\chi}\equiv\chi e^{-\frac{t}{n}}.
\end{eqnarray*}
Setting
\begin{eqnarray*}
\rho=-\log(\widetilde{\chi}-\vartheta).
\end{eqnarray*}
 By using the equation \eqref{4.2001}, we get the evolution of
 $\rho$ as follows
\begin{eqnarray*}
\frac{\partial}{\partial
t}\rho-\Phi^{\prime}F^{kl}\rho_{kl}=(\widetilde{\chi}-\vartheta)^{-1}\bigg(-\Phi^{\prime}
F^{kl}h_{k}^{p}h_{pl}\widetilde{\chi}+\frac{\widetilde{\chi}}{n}+\widetilde{\chi}\Phi^{\prime}F^{ij}\overline{R}(\nu,
X_i, (\lambda \partial_r)^{T}, X_j)\bigg)-
\Phi^{\prime}F^{kl}\frac{\widetilde{\chi}_{k}\widetilde{\chi}_{l}}{(\widetilde{\chi}-\vartheta)^2}.
\end{eqnarray*}

Next, we define the functions
\begin{eqnarray*}
\zeta=\sup\{h_{ij}\eta_i\eta_j: g_{ij}\eta^i \eta^j=1\}
\end{eqnarray*}
and
\begin{eqnarray*}
w=\log \zeta+\rho+\beta(r-\frac{t}{n}),
\end{eqnarray*}
where $\beta>0$ is supposed to be large. We claim that $w$ is
bounded, if $\beta$ is chosen sufficiently large. Fix $0<T<T^*$,
suppose $w$ attains a maximal value at $(t_0, \xi_0)$
$$\sup_{[0, T]\times\mathbb{S}^{n}}w=w(t_0, \xi_0), \ \  t_0>0.$$
Choose Riemannian normal coordinates at $(t_0, \xi_0)$ such that at
this point we have
\begin{eqnarray*}
g_{ij}=\delta_{ij}, \ \ \ h_{ij}=\kappa_i \delta_{ij}, \ \ \
\kappa_1\leq \kappa_2\leq...\leq \kappa_{n},
\end{eqnarray*}
then
\begin{eqnarray}\label{4.700}
F^{kl,pq}\eta_{kl}\eta_{pq}\leq \sum_{k\neq
l}\frac{F^{kk}-F^{ll}}{\kappa_k-\kappa_l}(\eta_{kl})^{2}\leq
\frac{2}{\kappa_n-\kappa_1}\sum_{i=1}^{n}(F^{nn}-F^{ii})(\eta_{ni})^{2}
\end{eqnarray}
and
\begin{eqnarray}\label{4.7000}
F^{nn}\leq \cdot\cdot\cdot\leq F^{11}.
\end{eqnarray}
For details, see, e.g., \cite[Lemma 1.1]{Ge4} and \cite[Lemma 2
]{Ec}.

Since $\zeta$ is only continuous in general, we need to find a
differential version instead. Set
\begin{eqnarray*}
\widetilde{\zeta}=\frac{h_{ij}\eta^i \eta^j}{g_{ij}\eta^i \eta^j},
\end{eqnarray*}
where $\eta=(0,...,0,1)$. There holds at $(t_0, \xi_0)$,
\begin{eqnarray*}
h_{n n}=h^{n}_{n}=\kappa_n=\zeta=\widetilde{\zeta}
\end{eqnarray*}
By a simple calculation, we find
\begin{eqnarray*}
\frac{\partial}{\partial
t}\widetilde{\zeta}=\frac{(\frac{\partial}{\partial t}h_{ij})\eta^i
\eta^j}{g_{ij}\eta^i \eta^j}-\frac{h_{ij}\eta^i
\eta^j}{(g_{ij}\eta^i \eta^j)^2}(\frac{\partial}{\partial
t}g_{ij})\eta^i \eta^j
\end{eqnarray*}
and
\begin{eqnarray*}
\frac{\partial}{\partial t}h^{n}_{n}=\frac{\partial}{\partial
t}(h_{nk}g^{kn})=(\frac{\partial}{\partial
t}h_{nk})g^{kn}-g^{ki}(\frac{\partial}{\partial
t}g_{ij})g^{jn}h_{nk}.
\end{eqnarray*}
Clearly, there holds in a neighborhood of $(t_0, \xi_0)$
\begin{eqnarray*}
\widetilde{\zeta}\leq\zeta
\end{eqnarray*}
and we find at $(t_0, \xi_0)$
\begin{eqnarray*}
\frac{\partial}{\partial
t}\widetilde{\zeta}=\frac{\partial}{\partial t}h^{n}_{n}
\end{eqnarray*}
and the spatial derivatives do also coincide. This implies that
$\widetilde{\zeta}$ satisfies the same evolution \eqref{4.7} as
$h^{n}_{n}$. Without loss of generality, we treat $h^{n}_{n}$ like a
scalar and pretend that $w$ is defined by
\begin{eqnarray*}
w=\log h^{n}_{n}+\rho+\beta(r-\frac{t}{n}).
\end{eqnarray*}
Using the asymptotic expansion of Riemannian curvature tensors
\eqref{2.2}, the non-vanishing terms
$\overline{\nabla}_{i}\overline{R}_{jklm}$ which appear in
\eqref{1111} can be fortunately controlled by
\begin{eqnarray*}
\mid F^{kl}(\overline{\nabla}_{k}\overline{R}_{\nu pj
l}g^{pi}+g^{pi}\overline{\nabla}_{p}\overline{R}_{\nu l j
k})\mid\leq C F^{pq}g_{pq}.
\end{eqnarray*}
Then, we get the evolution equation of $h_{n}^{n}$ from \eqref{1111}
\begin{eqnarray}\label{4.70}
&&\frac{\partial}{\partial t}\log
h_{n}^{n}-\Phi^{\prime}F^{kl}\nabla_{k}\nabla_{l}\log h^{n}_{n}\\
\nonumber&=&\frac{1}{\kappa_n}\bigg(\frac{\partial}{\partial t}
h_{n}^{n}-\Phi^{\prime}F^{kl}\nabla_{k}\nabla_{l}
h^{n}_{n}\bigg)+\Phi^{\prime}\frac{1}{\kappa_{n}^2}F^{kl}h_{n;
k}^{n}h_{n; l}^{n}
\\ \nonumber&\leq&\frac{1}{\kappa_n}\Phi^{\prime}\bigg(F^{kl}h_{kp}h^{p}_{l}\kappa_n-2F\kappa_{n}^2
-2F^{kl}\overline{R}_{knln}\kappa_{n}-2F^{kl}\overline{R}_{knnp}h^{p}_{l}\\
\nonumber&& +F\overline{R}_{\nu n n \nu}+F^{kl}\overline{R}_{\nu k
\nu l}\kappa_{n}+C F^{kl}g_{kl}-F\overline{R}_{\nu n \nu
n}\bigg)+\Phi^{\prime}\frac{1}{\kappa_{n}^2}F^{kl}h_{n; k}^{n}h_{n;
l}^{n}\\ \nonumber&&+F^{kl, pq}h_{kl;}^{\ \
i}h_{pq;j}+\Phi^{\prime\prime}F^{i}F_{j}.
\end{eqnarray}
Together with the evolution equations of $\rho$ and $r$ , we infer
at $(t_0, \xi_0)$, the following inequality
\begin{eqnarray}\label{4.79}
0&\leq&\Phi^{\prime}F^{kl}h_{kp}h^{p}_{l}(1-\frac{\widetilde{\chi}}{\widetilde{\chi}-\vartheta})
-2\Phi^{\prime}Fh^{n}_{n}+2\beta\Phi^{\prime}Fv^{-1}-\beta\Phi^{\prime}F^{ij}\overline{h}_{ij}-\frac{\beta}{n}
+\frac{1}{n}\frac{\widetilde{\chi}}{\widetilde{\chi}-\vartheta}\nonumber\\
&&+\Phi^{\prime}F^{kl}(\log h^{n}_{n})_k(\log
h^{n}_{n})_l-\Phi^{\prime}F^{kl}\rho_k
\rho_l+\frac{2}{\kappa_n-\kappa_1}\Phi^{\prime}\sum_{i=1}^{n}(F^{nn}-F^{ii})(h_{ni;}^{\
\ \  n})^2 (h_{n}^{n})^{-1}\\ \nonumber
&&+\frac{1}{\kappa_n}\Phi^{\prime}\bigg(
-2F^{kl}\overline{R}_{knln}\kappa_{n}-2F^{kl}\overline{R}_{knnp}h^{p}_{l}
+F^{kl}\overline{R}_{\nu k \nu l}\kappa_{n}+C
F^{kl}g_{kl}-2F\overline{R}_{\nu n \nu n}\bigg)\\ \nonumber&&
+\frac{\widetilde{\chi}}{\widetilde{\chi}-\vartheta}F^{ij}\overline{R}(\nu,
X_i, (\lambda \partial_r)^{T}, X_j)+\Phi^{\prime\prime}F^{i}F_{j}
\end{eqnarray}
holds. We can estimate the curvature terms by using \eqref{2.1}
$$\mid
-2F^{kl}\overline{R}_{knln}\kappa_{n}-2F^{kl}\overline{R}_{knnp}h^{p}_{l}
-F^{kl}\overline{R}_{\nu k \nu l}\kappa_{n}+C F^{kl}g_{kl}\mid\leq
C(1+\kappa_n)F^{kl}g_{kl}$$ and
$$\mid F\overline{R}_{\nu n \nu n}\mid \leq C F.$$
Then, using the inequalities \eqref{4.700} and \eqref{4.7000},
$\Phi^{\prime\prime}<0$ and
\begin{eqnarray*}
(\log h^{n}_{n})_i=-\rho_i-\beta r_i
\end{eqnarray*}
at $(t_0, \xi_0)$, we can get from the above inequality
\begin{eqnarray}\label{4.800}
0&\leq&\Phi^{\prime}F^{kl}h_{kp}h^{p}_{l}\frac{\vartheta}{\widetilde{\chi}-\vartheta}
+\Phi^{\prime}F^{kl}\bigg(C
g_{kl}(1+\kappa_{n}^{-1})-\beta\overline{h}_{kl}\bigg)-2\Phi^{\prime}Fh^{n}_{n}
\\&&+2\beta\Phi^{\prime}Fv^{-1}-\frac{\beta}{n}
+\bigg(\frac{1}{n}+F^{ij}\overline{R}(\nu,
X_i, (\lambda \partial_r)^{T}, X_j)\bigg)\frac{\widetilde{\chi}}{\widetilde{\chi}-\vartheta}\nonumber\\
&&+\beta^2\Phi^{\prime}F^{kl}r_k r_l-2\beta\Phi^{\prime}F^{kl}\rho_k
r_l+\frac{2}{\kappa_n-\kappa_1}\Phi^{\prime}\sum_{i=1}^{n}(F^{nn}-F^{ii})(h_{ni;}^{\
\ \  n})^2 (h_{n}^{n})^{-1}\nonumber\\ \nonumber
&&+C(\kappa_{n})^{-1}\Phi^{\prime}F.
\end{eqnarray}
Now, we estimate the left curvature term in the above inequality.
Clearly, we can get from \eqref{c3}
\begin{eqnarray}\label{a1}
\overline{R}(\nu, X_i, (\lambda \partial_r)^{T}, X_j)&=&\lambda
\overline{R}(\nu, X_i, X_k, X_j)r_l g^{kl}\\
\nonumber&=&\frac{1}{v^3
\lambda}(\lambda\lambda^{\prime\prime}+(1-(\lambda^{\prime})^2))[r_i
r_j-\lambda^2 \mid D \varphi\mid^2 \sigma_{ij}].
\end{eqnarray}
From \eqref{00.2}and \eqref{00.3}, we can get
\begin{eqnarray}\label{a2}
\frac{1}{\lambda}(\lambda\lambda^{\prime\prime}+(1-(\lambda^{\prime})^2))=(1+\frac{n-1}{2\lambda})\frac{m}{\lambda^n},
\end{eqnarray}
which is clearly bounded. Therefore,
$$F^{ij}\overline{R}(\nu, X_i, (\lambda \partial_r)^{T},
X_j)\leq CF^{ij}g_{ij}$$ Moreover, we know
\begin{eqnarray*}
F^{ij}\overline{h}_{ij}&=&\frac{\lambda^{\prime}}{\lambda}F^{ij}\overline{g}_{ij}=
\frac{\lambda^{\prime}}{\lambda}F^{ij}(g_{ij}-r_i
r_j)\geq\frac{\lambda^{\prime}}{\lambda}F^{ij}g_{ij}(1-g^{kl}r_k r_l)\\
&=&\frac{\lambda^{\prime}}{\lambda}v^{-2}F^{ij}g_{ij}\geq
C_0F^{ij}g_{ij},
\end{eqnarray*}
where we use \eqref{4.6} and \eqref{00.1} in the last inequality.
Furthermore, it is easy to check
\begin{eqnarray*}
v_i=v\frac{\overline{H}}{n}r_i-v^2h_{i}^{k}r_{k}
\end{eqnarray*}
(see (5.29) in \cite{Ge7}), and thus
\begin{eqnarray*}
\mid \nabla \rho \mid\leq C_2\mid \nabla v\mid +C_2\mid \nabla
r\mid\leq C_2|\kappa_n|\mid \nabla r\mid +C_2\mid \nabla r\mid,
\end{eqnarray*}
where $|\nabla \rho|=\sqrt{g^{ij}\nabla_{i}\rho\nabla_{j}\rho}$. We
distinguish two cases. \

\emph{Case 1}. If $\kappa_1<-\epsilon_1\kappa_n$,
$0<\varepsilon_1<1$, then
\begin{eqnarray*}
F^{kl}h_{kp}h^{p}_{l}\geq\frac{1}{n}F^{kl}g_{kp}\epsilon_{1}^{2}\kappa_{n}^{2}.
\end{eqnarray*}
Hence, after abandoning the negative term
$-2\Phi^{\prime}F\kappa_n$, \eqref{4.800} becomes
\begin{eqnarray*}\label{4.81}
0&\leq&\Phi^{\prime}F^{kl}g_{kl}\bigg(-\frac{1}{n}\epsilon_{1}^{2}\kappa_{n}^{2}\frac{\vartheta}
{\widetilde{\chi}-\vartheta}+C(1+\kappa_{n}^{-1})-\beta
C_0+C\frac{\widetilde{\chi}}{\widetilde{\chi}
-\vartheta}\nonumber\\
&&+2\beta C_{2}(\kappa_n+1)\mid \nabla r\mid^2+\beta^2\mid \nabla
r\mid^2\bigg)
\nonumber\\
&& 2\beta\Phi^{\prime}Fv^{-1}-\frac{\beta}{n}
+\frac{1}{n}\frac{\widetilde{\chi}}{\widetilde{\chi}-\vartheta}+C\kappa_{n}^{-1}\Phi^{\prime}F
.
\end{eqnarray*}
Sine $F$ bounded from above and below, $F^{ij}g_{ij}\geq
F(1,...,1)=n$ and $\mid \nabla r\mid=\frac{|D\varphi|}{v}\leq C(n,
\Sigma_0)$, the first line converges to $-\infty$ if
$\kappa_n\rightarrow +\infty$. Moreover, the last line is uniformly
bounded by some $C=C(n, \Sigma_0)$. Hence, in this case we conclude
that
\begin{eqnarray*}
\kappa_{n}\leq C(n, \Sigma_0)
\end{eqnarray*}
for any choice of $\beta$.

\emph{Case 2}. If $\kappa_1\geq-\epsilon_1\kappa_n$,
$0<\varepsilon_1<1$, then
\begin{eqnarray*}
\frac{2}{\kappa_n-\kappa_1}\Phi^{\prime}\sum_{i=1}^{n}(F^{nn}-F^{ii})(h_{ni;}^{\
\ \  n})^2 (h_{n}^{n})^{-1}\leq
\frac{2}{1+\epsilon_1}\Phi^{\prime}\sum_{i=1}^{n}(F^{nn}-F^{ii})(\log
h^{n}_{n})^{2}_{i}+C\Phi^{\prime}F^{ij}g_{ij}\kappa_{n}^{-2},
\end{eqnarray*}
where we use $h_{ni; n}=h_{nn; i}+\overline{R}_{n n i \nu}$ in view
of the Codazzi equation and the boundedness of the curvature
\eqref{1.2}. Thus, the terms in \eqref{4.79} containing the
derivatives of $h^{n}_{n}$ can therefore be estimated from above by
\begin{eqnarray*}
&&\Phi^{\prime}F^{ij}(\log h^{n}_{n})_{i}(\log
h^{n}_{n})_{j}+\frac{2}{\kappa_n-\kappa_1}\Phi^{\prime}\sum_{i=1}^{n}(F^{nn}-F^{ii})(h_{ni;}^{\
\ \  n})^2 (h_{n}^{n})^{-1}\\&&\leq
\frac{2}{1+\epsilon_1}\Phi^{\prime}\sum_{i=1}^{n}F^{nn}(\log
h^{n}_{n})^{2}_{i}-\frac{1-\epsilon_1}{1+\epsilon_1}\Phi^{\prime}\sum_{i=1}^{n}F^{ii}(\log
h^{n}_{n})^{2}_{i}+C\Phi^{\prime}F^{ij}g_{ij}\kappa_{n}^{-2}\\&&\leq
\frac{2}{1+\epsilon_1}\Phi^{\prime}\sum_{i=1}^{n}F^{nn}(\log
h^{n}_{n})^{2}_{i}-\frac{1-\epsilon_1}{1+\epsilon_1}\Phi^{\prime}\sum_{i=1}^{n}F^{nn}(\log
h^{n}_{n})^{2}_{i}+C\Phi^{\prime}F^{ij}g_{ij}\kappa_{n}^{-2}\\&&=\Phi^{\prime}F^{nn}|\nabla\rho+\beta
\nabla
r|^2+C\Phi^{\prime}F^{ij}g_{ij}\kappa_{n}^{-2}\\&&=\Phi^{\prime}F^{nn}(|\nabla\rho|^2+2\beta\langle\nabla\rho,
\nabla r\rangle+\beta^2|\nabla
r|^2)+C\Phi^{\prime}F^{ij}g_{ij}\kappa_{n}^{-2}.
\end{eqnarray*}
Hence, taking the above inequality into the estimate \eqref{4.79}
yields
\begin{eqnarray*}\label{4.779}
0&\leq&-\Phi^{\prime}F^{nn}\kappa_{n}^{2}\frac{\vartheta}{\widetilde{\chi}-\vartheta}
+\Phi^{\prime}F^{kl}g_{kl}\bigg(1-\beta
C_{0}+C(1+\kappa_{n}^{-1}+\kappa_{n}^{-2}+\frac{\widetilde{\chi}}{\widetilde{\chi}-\vartheta})
\bigg)\\&&-2\Phi^{\prime}F\kappa_{n}+2\beta\Phi^{\prime}Fv^{-1}-\frac{\beta}{n}
+\frac{1}{n}\frac{\widetilde{\chi}}{\widetilde{\chi}-\vartheta}+\Phi^{\prime}F^{nn}(2\beta|
\nabla \rho| |\nabla r|+\beta^2|
\nabla r|^2)\\
&&+C\kappa_{n}^{-1}\Phi^{\prime}F<0
\end{eqnarray*}
for large $\kappa_n$ if $\beta$ is chosen large enough. Thus we
obtain
\begin{eqnarray*}
\kappa_n(t_0, \xi_0)\leq C(n, \Sigma_0).
\end{eqnarray*}
Since $\rho$ and $\widetilde{r}$ are bounded from above, we conclude
our claim.
\end{proof}

\begin{corollary}\label{4.08}
Under the hypothesis of Proposition \ref{4.07}, there exists a
compact set $K\subset \mathbb{R}^n$ such that
\begin{equation*}
(\kappa_i)\subset K\subset\subset\Gamma.
\end{equation*}
\end{corollary}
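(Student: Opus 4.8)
The plan is to read off the conclusion from the three a priori estimates already established: the one-sided curvature bound of Proposition~\ref{4.07} and the two-sided bounds on $F$ of Propositions~\ref{F1} and~\ref{F2}. Proposition~\ref{4.07} supplies a uniform constant $C=C(n,\Sigma_0)$ with $\kappa_i(t,\xi)\le C$ for every index $i$ and every $(t,\xi)\in[0,T^*)\times\Sigma$. What is still missing is (a) a uniform \emph{lower} bound on the smallest principal curvature $\kappa_1$, and (b) the geometrically essential fact that the principal curvature vector stays a fixed distance away from $\partial\Gamma$.

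For (a) I would use concavity and $1$-homogeneity of $F$ in the standard way. Since $F$ is $1$-homogeneous, Euler's relation gives $\sum_i \tfrac{\partial F}{\partial\kappa_i}(1,\dots,1)=F(1,\dots,1)=n$, and by symmetry of $F$ each of these $n$ equal derivatives equals $1$. Concavity then yields the elementary inequality
\[
F(\kappa)\le F(1,\dots,1)+\sum_i \tfrac{\partial F}{\partial\kappa_i}(1,\dots,1)(\kappa_i-1)=\sum_i\kappa_i .
\]
Combining with the lower bound $F\ge C_0=C_0(n,\Sigma_0)>0$ from Proposition~\ref{F2}, we get $\sum_i\kappa_i\ge C_0$, whence $\kappa_1\ge C_0-(n-1)C=:-C_1$ after discarding the remaining $n-1$ eigenvalues with the bound $\kappa_i\le C$. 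Thus along the flow $\kappa_i(t,\xi)\in[-C_1,C]$ for all $i$ and all $(t,\xi)$, with $C_1$ and $C$ depending only on $n$ and $\Sigma_0$.

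For (b), and to conclude, I would simply set
\[
K:=\bigl\{\kappa=(\kappa_1,\dots,\kappa_n)\in\overline{\Gamma}\ :\ \kappa_i\in[-C_1,C]\ \ \forall i,\ \ F(\kappa)\ge C_0\bigr\}.
\]
By construction $K$ is bounded, and it is closed since $F\in C^0(\overline{\Gamma})$; hence $K$ is compact. Moreover $F|_{\partial\Gamma}=0<C_0$, so $K\cap\partial\Gamma=\emptyset$, i.e. $K\subset\Gamma$, which is exactly $K\subset\subset\Gamma$. Propositions~\ref{F1}, \ref{F2} and~\ref{4.07} then say precisely that $(\kappa_i(t,\xi))\in K$ for every $(t,\xi)\in[0,T^*)\times\Sigma$, which is the assertion. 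The only genuinely nonroutine step is the derivation of the lower bound on $\kappa_1$ from the upper bound on $\kappa_n$ and the two-sided bound on $F$; once those estimates are in hand the rest is soft point-set topology using only the structural hypotheses $F>0$ on $\Gamma$, $F=0$ on $\partial\Gamma$, and $F\in C^0(\overline{\Gamma})$.
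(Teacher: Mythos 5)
Your proof is correct and follows essentially the same route as the paper, whose own proof is a one-line sketch ("$F$ bounded from below and $F^{ij}h_{ij}=F$, together with Proposition~\ref{4.07}"): you simply supply the details, deriving the missing lower bound on $\kappa_1$ from concavity and Euler's relation via $F\le\sum_i\kappa_i$, and then making the compactness/distance-to-$\partial\Gamma$ argument explicit using $F|_{\partial\Gamma}=0$ and $F\in C^{0}(\overline{\Gamma})$.
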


\begin{proof}
Noticing that $F$ is bounded from below and $F^{ij}h_{ij}=F$,
Proposition \ref{4.07} implies the result.
\end{proof}

\begin{theorem}
Under the hypothesis of Theorem \ref{main}, we conclude
\begin{equation*}
T^{*}=+\infty.
\end{equation*}
\end{theorem}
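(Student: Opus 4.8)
The plan is to argue by contradiction. Assume $T^{*}<+\infty$. The strategy is to show that under this assumption all the geometric data of the flow remain uniformly controlled on $[0,T^{*})$, so that the leaves converge smoothly to an admissible star-shaped graph $\Sigma_{T^{*}}$ as $t\to T^{*}$; restarting the short-time existence result from $\Sigma_{T^{*}}$ then extends the flow beyond $T^{*}$, contradicting its maximality.

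First I would collect the a priori estimates already proved. From Lemma \ref{4.01} together with the asymptotic expansion \eqref{2.01}, and using $T^{*}<\infty$, one gets a two-sided bound $0<c_{1}\leq r(t,\theta)\leq c_{2}<\infty$ on $[0,T^{*})\times\mathbb{S}^{n}$, so the flow hypersurfaces stay inside a fixed compact region of $M$. By Lemma \ref{4.5} we have $|D\varphi|^{2}\leq\sup_{\mathbb{S}^{n}}|D\varphi(0,\cdot)|^{2}$, hence by \eqref{2.7} the gradient function $v$, and therefore $|Dr|$, is uniformly bounded; in particular the leaves remain uniformly graphical. Finally, Proposition \ref{4.07} bounds the principal curvatures from above, and combined with the lower bound on $F$ from Proposition \ref{F2} and the identity $F^{ij}h_{ij}=F$ this gives, as in Corollary \ref{4.08}, that $(\kappa_{i}(t,\cdot))$ stays in a fixed compact set $K\subset\subset\Gamma$ for all $t<T^{*}$.

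Next I would view \eqref{3.2}, via the expression \eqref{2.07} for $h^{i}_{j}$, as a fully nonlinear scalar parabolic equation for $r$ on the closed manifold $\mathbb{S}^{n}$. Because $F$ is monotone and the principal curvatures lie in the fixed compact set $K$ on which $F$ is smooth, the equation is uniformly parabolic with coefficients depending smoothly and with uniform bounds on $(r,Dr,D^{2}r)$; moreover $F$ is concave. Hence the Krylov–Safonov estimate and the Evans–Krylov theorem apply and yield a uniform $C^{2,\alpha}$ bound for $r$ in space-time, for some $\alpha\in(0,1)$. Differentiating the equation and iterating parabolic Schauder estimates then produces uniform $C^{k,\alpha}$ bounds for every $k$. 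Consequently $r(t,\cdot)$ converges in $C^{\infty}(\mathbb{S}^{n})$ as $t\to T^{*}$ to a smooth limit $r(T^{*},\cdot)$; the limit graph $\Sigma_{T^{*}}$ is star-shaped and still $F$-admissible, since $K\subset\subset\Gamma$ is closed. Applying the short-time existence statement recalled in Section 2 (see \cite{Sc1,Sc2}) with initial datum $\Sigma_{T^{*}}$, the flow extends to $[0,T^{*}+\varepsilon)$ for some $\varepsilon>0$, contradicting the maximality of $T^{*}$. Therefore $T^{*}=+\infty$.

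The main obstacle is the step upgrading the zeroth-, first- and second-order a priori bounds to full regularity, i.e. the uniform $C^{2,\alpha}$ estimate for the fully nonlinear parabolic equation: this is precisely where the concavity of $F$ is essential (Evans–Krylov), and where one must check that the compact containment $(\kappa_{i})\subset K\subset\subset\Gamma$ from Corollary \ref{4.08} keeps the ellipticity constants uniformly bounded away from degeneracy. Once this is in place, the higher-order bootstrapping and the continuation argument are standard.
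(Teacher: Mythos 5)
Your proposal is correct and follows essentially the same route as the paper: a priori $C^{0}$, $C^{1}$ and curvature bounds (Lemma \ref{4.01}, Lemma \ref{4.5}, Proposition \ref{4.07}, Corollary \ref{4.08}) together with the two-sided bounds on $F$ make the scalar equation \eqref{4.2} uniformly parabolic on finite intervals, after which Krylov--Safonov/Krylov $C^{2,\alpha}$ theory for concave operators and standard bootstrapping yield the continuation beyond any finite $T^{*}$. The paper compresses the final continuation step into one sentence, whereas you spell out the contradiction-and-restart argument explicitly, but the substance is identical.
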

\begin{proof}
Recalling that $\varphi$ satisfies the equation \eqref{4.2}
\begin{equation*}
\frac{\partial \varphi}{\partial t}=\frac{v}{\lambda
F(h^{i}_{j})}=G(x, \varphi, D\varphi, D^{2}\varphi).
\end{equation*}
By a simple calculation, we get
\begin{equation*}
\frac{\partial G}{\partial
\varphi^{i}_{j}}=\frac{1}{\lambda^{2}F^{2}}F^{j}_{k}\widetilde{g}^{k}_{i},
\end{equation*}
where $\widetilde{g}^{k}_{i}$ and $\delta^{k}_{i}$ are equivalent
norms, since $v\leq C$. Therefore, we can conclude the equation
\eqref{4.2} is uniformly parabolic on finite intervals from
Proposition \ref{F1}, Proposition \ref{F2} and Corollary \ref{4.08}.
Recalling that $h^{i}_{j}=\frac{1}{\lambda
v}(\lambda^{\prime}\delta^{i}_{j}-\widetilde{g}^{ik}\varphi_{kj}),$
where
$\widetilde{g}^{ij}=\sigma^{ij}-\frac{\varphi^i\varphi^j}{v^2}$, we
have
\begin{equation}\label{490}
|\varphi|_{C^2(\mathbb{S}^n)}\leq C(n, \Sigma_0, T^*)
\end{equation}
by using the estimate \eqref{4.6} and Corollary \ref{4.08}. Then by
Krylov-Safonov estimate \cite{Kr}, we have
\begin{equation*}
|\varphi|_{C^{2, \alpha}(\mathbb{S}^n)}\leq C(n, \Sigma_0, T^*),
\end{equation*}
which implies the maximal time interval is unbounded, i.e.,
$T^*=+\infty$.
\end{proof}

\

\textbf{Optimal decay estimates}

\

First, we recall \cite[Lemma 4.2]{Sc1} which will be used in the
next lemma.

\begin{lemma}\label{Ju}
Let $f\in C^{0, 1}(\mathbb{R}_{+})$ and let $D$ be the set of points
of differentiability of $f$. Suppose that for all $\epsilon>0$ there
exist $T_\epsilon>0$ and $\delta_\epsilon>0$ such that
\begin{eqnarray*}
A_\epsilon=\{t\in[T, +\infty)\cap D: f(t)\geq \epsilon\}\subset
\{t\in[T_\epsilon, +\infty)\cap D: f^{\prime}(t)\geq
-\delta_\epsilon\}.
\end{eqnarray*}
Then there holds
$$
\lim_{t\rightarrow\infty}\sup f(t)\leq 0.$$
\end{lemma}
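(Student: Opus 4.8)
The plan is to argue by contradiction via a level‑crossing argument at a positive threshold. Suppose $\limsup_{t\to\infty} f(t)>0$. Then there are $\epsilon>0$ and a sequence $t_k\to\infty$ with $f(t_k)\ge 2\epsilon$ (take $\epsilon$ to be one quarter of $\limsup_{t\to\infty} f$ when that number is finite, and $\epsilon=1$ when it is $+\infty$). Applying the hypothesis to this $\epsilon$ yields $T_\epsilon>0$ and $\delta_\epsilon>0$ such that every $t\in D$ with $t\ge T_\epsilon$ and $f(t)\ge\epsilon$ satisfies $f'(t)\le-\delta_\epsilon$; this is the containment $A_\epsilon\subset\{t\in[T_\epsilon,+\infty)\cap D:\ f'(t)\le-\delta_\epsilon\}$ assumed in the lemma, with the threshold $T$ in the definition of $A_\epsilon$ read as $T_\epsilon$ and the inequality oriented as in the source \cite[Lemma~4.2]{Sc1} (so that, where $f$ is at least $\epsilon$, it is forced to decrease).

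The working mechanism is elementary: $f$ is Lipschitz, hence absolutely continuous, so $f(b)-f(a)=\int_a^b f'(s)\,ds$ for all $T_\epsilon\le a\le b$, the integrand being defined on the full‑measure set $D$. Therefore, on any interval $[a,b]\subset[T_\epsilon,+\infty)$ along which $f\ge\epsilon$, one gets $f(b)\le f(a)-\delta_\epsilon(b-a)$: whenever $f$ sits at the level $\epsilon$ or above, it must decrease, and at the uniform linear rate $\delta_\epsilon$.

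With this in hand I would show $f\le\epsilon$ on $[\tau,+\infty)$ for a suitable $\tau$. Fix $k$ with $t_k\ge T_\epsilon$ and put $\tau=\sup\{t\ge t_k:\ f>\epsilon \text{ on } [t_k,t]\}$. The decrease estimate on $[t_k,\tau)$ gives $f(t)\le f(t_k)-\delta_\epsilon(t-t_k)$, forcing $\tau<\infty$ and, by continuity, $f(\tau)=\epsilon$. If $f(t_1)>\epsilon$ for some $t_1>\tau$, set $t_0=\sup\{t\in[\tau,t_1]:\ f(t)\le\epsilon\}$; this is well defined since $f(\tau)=\epsilon$, satisfies $\tau\le t_0<t_1$, and by continuity $f(t_0)=\epsilon$ while $f>\epsilon$ on $(t_0,t_1]$. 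Then the decrease estimate gives $f(t_1)\le f(t_0)-\delta_\epsilon(t_1-t_0)<\epsilon$, contradicting $f(t_1)>\epsilon$. Hence $f\le\epsilon$ on $[\tau,+\infty)$, so $\limsup_{t\to\infty} f(t)\le\epsilon<2\epsilon$, which contradicts $f(t_k)\ge2\epsilon$ for arbitrarily large $t_k$. Thus $\limsup_{t\to\infty} f(t)\le0$.

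The one point that needs care — and which I would write out in full — is the ``no return above $\epsilon$'' step: the hypothesis controls $f'$ only on the superlevel set $\{f\ge\epsilon\}$, so a priori $f$ could drop to $\epsilon$ and climb back up. This is ruled out purely by continuity together with the uniform decrease rate, exactly as above: whenever $f$ exceeds $\epsilon$ after having equalled it, the whole interval back to the previous time $f$ took the value $\epsilon$ lies inside $\{f>\epsilon\}$, on which $f$ is strictly decreasing, which is impossible. The remaining ingredients (the fundamental theorem of calculus for Lipschitz functions, and that the two suprema are attained with the asserted values by continuity) are routine.
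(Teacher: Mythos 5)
Your proof is correct. Note that the paper itself does not prove this lemma --- it is simply recalled from \cite[Lemma 4.2]{Sc1} --- so there is no internal proof to compare with; your level-crossing argument (absolute continuity of the Lipschitz function $f$, the uniform linear decay $f(b)\le f(a)-\delta_\epsilon(b-a)$ on intervals where $f\ge\epsilon$, finiteness of the first exit time $\tau$, and the no-return step) is essentially the standard proof of the cited result. You were also right to read the hypothesis as $f'(t)\le-\delta_\epsilon$ on $A_\epsilon$ with $T=T_\epsilon$: as literally printed (with $f'(t)\ge-\delta_\epsilon$) the statement fails for, say, $f\equiv 1$, and the application later in the paper (where the decay of $\widetilde{\psi}$ is negative) confirms that the intended inequality is the one you used.
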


\begin{lemma}\label{4.088}
Under the hypothesis of Theorem \ref{main}, the principle curvatures
of the flow hypersurfaces converges to $1$,
\begin{equation*}
\sup_{\Sigma}|\kappa_{i}(t, \cdot)-1|\rightarrow 0, \ \
t\rightarrow\infty, \ \ \forall 1\leq i\leq n.
\end{equation*}
\end{lemma}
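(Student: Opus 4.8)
The plan is to combine the curvature estimates already established with a suitable barrier/test-function argument using Lemma \ref{Ju}. From Proposition \ref{4.07} and Corollary \ref{4.08} we know the principal curvatures stay in a fixed compact subset of $\Gamma$, so $F$, $F^{ij}$ and their derivatives are uniformly controlled; moreover $F$ is pinched between two positive constants by Propositions \ref{F1} and \ref{F2}. The key auxiliary quantity will be $\widetilde\chi=\chi e^{-t/n}=\lambda v^{-1}e^{-t/n}$, which by Lemma \ref{4.01} and Lemma \ref{4.5} is bounded between positive constants and, using the $C^1$-decay \eqref{4.61}, satisfies $\widetilde\chi=\lambda e^{-t/n}(1+O(e^{-\mu t}))$. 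Since $\lambda e^{-t/n}$ converges (by the asymptotic expansion \eqref{2.01} together with \eqref{4.1}), one expects $\widetilde\chi$ to converge, so $v\to 1$; combined with \eqref{2.07} and \eqref{4.61} this already gives $|D\varphi|\to 0$.

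The heart of the argument is to show $\kappa_n-1\to 0$ (an analogous, slightly easier argument handles $\kappa_1$ from below, or one uses that $F(\kappa)=n$ with all $\kappa_i$ near each other). First I would derive, from the evolution equation \eqref{4.70} for $\log h^n_n$ and the evolution equations for $\rho$ and $r$, a differential inequality for a test function of the form
$$ w = \log h^n_n - \tfrac{t}{n} + \text{(lower order corrections in } \varphi,\ v,\ \widetilde\chi\text{)}. $$
Using \eqref{2.1}, \eqref{2.2} and the already-proven estimates \eqref{00.1}--\eqref{00.3}, every curvature term carries an exponentially decaying factor, so the ``bad'' terms in the evolution inequality are $O(e^{-\delta t})$ for some $\delta>0$. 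At an interior spatial maximum the gradient terms vanish and one is left with an inequality of the schematic form
$$ \tfrac{\partial}{\partial t}\log h^n_n \le -\,c\,(\kappa_n-1)\,\Phi'F^{ij}g_{ij} + O(e^{-\delta t}), $$
valid whenever $\kappa_n$ exceeds $1$ by a fixed amount; since $\Phi'F^{ij}g_{ij}\ge c'>0$ by concavity and $1$-homogeneity, this says that $f(t):=\sup_{\mathbb S^n}\log h^n_n(t,\cdot)-t/n$ has $f'(t)\ge -\delta_\epsilon$ only where $f$ is not too large, which is exactly the hypothesis of Lemma \ref{Ju}; hence $\limsup_{t\to\infty} f(t)\le 0$, i.e. $\limsup_t\sup\kappa_n\le 1$. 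A symmetric argument applied to $\log(1/\kappa_1)$ (or to $-\log h^{11}$, using \eqref{4.7000}) gives $\liminf_t\inf\kappa_1\ge 1$. Since $\kappa_1\le\cdots\le\kappa_n$, both bounds together force $\sup_\Sigma|\kappa_i-1|\to 0$.

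The main obstacle is bookkeeping the curvature terms in the evolution inequality: because the ambient manifold has non-constant curvature, the terms $\overline\nabla_k\overline R_{\nu pjl}$ appear (unlike in Gerhardt's hyperbolic case), and one must verify via \eqref{2.2} that they — and all the $\overline R$ terms estimated through \eqref{2.1}, \eqref{c1}, \eqref{c3}, \eqref{a1}, \eqref{a2} — are genuinely $O(e^{-\delta t})$ after multiplication by the bounded factors $F^{ij}g_{ij}$, $\Phi'$, etc., so that they fall into the ``$\delta_\epsilon$'' part of Lemma \ref{Ju} rather than contaminating the coercive leading term. A secondary technical point, already handled in Proposition \ref{4.07}, is that $\zeta$ is only continuous, so one again freezes the eigenvector $\eta=(0,\dots,0,1)$ and works with $h^n_n$ as a scalar at the maximum point, noting that its spatial derivatives and time derivative agree with those of $\zeta$ there. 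Once these estimates are in place the application of Lemma \ref{Ju} is routine.
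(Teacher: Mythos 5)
Your first half---the bound $\limsup_{t\to\infty}\sup_\Sigma\kappa_n\le 1$---is essentially workable and is a legitimate variant of the paper's argument: the coercive term at the spatial maximum is $2\Phi F(h^n_n-v^{-1})=-\tfrac{2}{F}(h^n_n-v^{-1})$, all ambient-curvature and gradient terms decay by \eqref{2.1}, \eqref{2.2}, \eqref{00.1}--\eqref{00.3} and \eqref{4.61}, and one may either feed $\sup_\Sigma\log h^n_n$ (plus the bounded correction $-\log\widetilde{\chi}+\widetilde{r}-\log 2$) into Lemma \ref{Ju}, as you do, or multiply the whole test function by $t$ and run a single maximum-principle argument, as the paper does. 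One caveat: your leading term must not literally be $\log h^n_n-\tfrac{t}{n}$. The correction $-\log\widetilde{\chi}+\widetilde{r}$ equals $\log v-\log\lambda+r$, in which the $\tfrac{t}{n}$'s cancel and which converges to $\log 2$; only then does boundedness of the test function yield $\limsup\log h^n_n\le 0$. With a genuine $-\tfrac{t}{n}$ the conclusion would be vacuous, since $\kappa_n\le C$ is already known.

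The genuine gap is the lower bound $\liminf_{t\to\infty}\inf_\Sigma\kappa_1\ge 1$. There is no ``symmetric argument applied to $\log(1/\kappa_1)$'': at a spatial minimum of the smallest principal curvature the terms $\Phi'F^{kl,pq}h_{kl;1}h_{pq;1}\le 0$ and $\Phi''F^{1}F_{1}\le 0$ appearing in \eqref{4.7} now have the \emph{unfavorable} sign and cannot be discarded or absorbed---the concavity inequality \eqref{4.700} only controls the second derivatives of $F$ against the largest-eigenvalue direction at a maximum---so the uncontrolled derivatives $h_{kl;1}$ destroy the minimum principle. Your fallback ``one uses that $F(\kappa)=n$'' is circular: Propositions \ref{F1} and \ref{F2} give only $0<c\le F\le C$, not $F\to n$. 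The paper closes the gap with a separate, non-symmetric argument: it applies Lemma \ref{Ju} to $\widetilde{\psi}=\sup_\Sigma\big(\log(-\Phi)-\log\widetilde{\chi}+\widetilde{r}-\log 2-\log\tfrac1n\big)$, whose evolution contains $\tfrac{2}{Fv}-\Phi'F^{ij}\overline{h}_{ij}$, which by $F^{ij}g_{ij}\ge n$ (concavity plus the normalization $F(1,\dots,1)=n$) is bounded above by a strictly negative constant whenever $F\le n-\gamma$; this yields $\liminf_t\inf_\Sigma F\ge n$. Combining that with $\limsup\kappa_i\le 1$ and the strict monotonicity of $F$ then forces every $\kappa_i\to 1$. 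Without this second ingredient, or an equivalent substitute, your proof establishes only the one-sided bound.
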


\begin{proof}
We use the method which first appears in \cite{Sc1}. Define the
functions
\begin{eqnarray*}
\zeta=\sup\{h_{ij}\eta_i\eta_j: g_{ij}\eta^i \eta^j=1\}
\end{eqnarray*}
and
\begin{eqnarray*}
w=(\log \zeta-\log\widetilde{\chi}+\widetilde{r}-\log2)t,
\end{eqnarray*}
where $\widetilde{\chi}=\chi e^{-\frac{t}{n}}$ and
$\widetilde{r}=r-\frac{t}{n}$. We claim that $w$ is bounded. Fix
$0<T<+\infty$, suppose $w$ attains a maximal value at $(t_0,
\xi_0)$,
$$\sup_{[0, T]\times\mathbb{S}^{n-1}}w=w(t_0, \xi_0), \ \  t_0>0.$$
Choose Riemannian normal coordinates at $(t_0, \xi_0)$ such that at
this point we have
\begin{eqnarray*}
g_{ij}=\delta_{ij}, \ \ \ h_{ij}=\kappa_i \delta_{ij}, \ \ \
\kappa_1\leq \kappa_2\leq...\leq \kappa_{n}.
\end{eqnarray*}
Then it follows
\begin{eqnarray*}
w=(\log h_{n}^{n}-\log\widetilde{\chi}+\widetilde{r}-\log2)t.
\end{eqnarray*}

First, we claim that
\begin{eqnarray*}
(-\log\widetilde{\chi}+\widetilde{r}-\log2)t=(\log v-\log
\lambda+r-\log2)t=(\log v-\log 2\lambda+r)t
\end{eqnarray*}
is bounded. On the one hand, using the estimate \eqref{4.61},
\begin{eqnarray*}
t\log v=\log(1+v-1)^{t}\leq \log(1+Ce^{-\mu t})^{t}
\end{eqnarray*}
is bounded. On the other hand, the asymptotic expansions
\eqref{2.01} and \eqref{re4.1} imply
\begin{eqnarray*}
e^{(-\log
2\lambda+r)t}=(1-e^{-2r}+o(e^{-2r}))^{-t}\leq(1-Ce^{-\frac{2t}{n}})^{-t}
\end{eqnarray*}
is also bounded. Therefore, we prove our claim.

Using the evolution equations of $h^{n}_{n}$, $\widetilde{\chi}$ and
$\widetilde{r}$, as \eqref{4.79}, we can obtain the following
evolution equation of $w$

\begin{eqnarray}\label{4.79}
&&\frac{\partial}{\partial
t}w-\Phi^{\prime}F^{ij}w_{ij}\nonumber
\\&=&\bigg(-2\Phi^{\prime}Fh^{n}_{n}+2\Phi^{\prime}Fv^{-1}-\Phi^{\prime}F^{ij}\overline{h}_{ij}-\Phi^{\prime}\kappa_{n}F^{kl}(\log
h^{n}_{n})_k(\log h^{n}_{n})_l\nonumber \\
\nonumber &&-\Phi^{\prime}F^{kl}(\log \widetilde{\chi})_k (\log
\widetilde{\chi})_l+\Phi^{\prime}F^{kl, pq}h_{kl; n}h_{pq;}^{\ \
n}(h_{n}^{n})^{-1}\\ \nonumber
&&+\frac{1}{\kappa_n}\Phi^{\prime}\big(
-2F^{kl}\overline{R}_{knln}\kappa_{n}-2F^{kl}\overline{R}_{knnp}h^{p}_{l}
+F^{kl}\overline{R}_{\nu k \nu l}\kappa_{n}+
F^{kl}(\overline{\nabla}_{k}\overline{R}_{\nu n n
l}+\overline{\nabla}_{n}\overline{R}_{\nu lnk})-2F\overline{R}_{\nu
n \nu n}\big)\\ \nonumber&&+F^{ij}\overline{R}(\nu, X_i, (\lambda
\partial_r)^{T},
X_j)+\Phi^{\prime\prime}F^{i}F_{j}\bigg)t_0\nonumber\\
\nonumber && +\log
h_{n}^{n}-\log\widetilde{\chi}+\widetilde{r}-\log2.
\end{eqnarray}
Using the asymptotic expansion of Riemannian curvature tensors
\eqref{2.2} and \eqref{2.1}, we have
\begin{eqnarray*}
\mid F^{kl}(\overline{\nabla}_{k}\overline{R}_{\nu i m
l}g^{mj}+\overline{\nabla}^{n}\overline{R}_{\nu l n k})\mid\leq C
e^{-\frac{n+1}{n}t}.
\end{eqnarray*}
and
$$-2F^{kl}\overline{R}_{knln}\kappa_{n}-2F^{kl}\overline{R}_{knnp}h^{p}_{l}
-F^{kl}\overline{R}_{\nu k \nu l}\kappa_{n}-2F\overline{R}_{\nu n
\nu n}=F^{kl}g_{kl}\kappa_n+O(e^{-\frac{n+1}{n}t}).$$ Moreover, we
can get from \eqref{a1} and \eqref{a2}
$$|F^{ij}\overline{R}(\nu, X_i, (\lambda \partial_r)^{T},
X_j)|\leq Ce^{-t}.$$
Therefore, we have
\begin{eqnarray}\label{4.079}
\frac{\partial}{\partial t}w-\Phi^{\prime}F^{ij}w_{ij}&\leq&\bigg(
\Phi^{\prime}F^{kl}g_{kl}-2\Phi^{\prime}Fh^{n}_{n}+2\Phi^{\prime}Fv^{-1}-\Phi^{\prime}F^{ij}\overline{h}_{ij}
+\Phi^{\prime\prime}F_{n}F^{n}(h_{n}^{n})^{-1}
\nonumber\\
\nonumber &&+\Phi^{\prime}F^{kl}(\log h^{n}_{n})_k(\log
h^{n}_{n})_l-\Phi^{\prime}F^{kl}(\log\widetilde{\chi})_k (\log\widetilde{\chi})_l+\Phi^{\prime}F^{kl, pq}h_{kl; n}h_{pq;}^{\ \ n}(h_{n}^{n})^{-1}\bigg)t_{0}\nonumber\\
\nonumber &&+(\log
h_{n}^{n}-\log\widetilde{\chi}+\widetilde{r}-\log2)+O(1)
\nonumber\\
\nonumber &\leq& \Phi(2h_{n}^{n}-2v^{-1})t_0+\Phi^{\prime}F^{kl}(r_k
r_l+(1-\frac{\lambda^{\prime}}{\lambda})\lambda^{2}\sigma_{ij})t_0\nonumber\\
\nonumber &&+\Phi^{\prime}\bigg((\log h^{n}_{n})_k(\log
h^{n}_{n})_l-(\log \widetilde{\chi})_k(\log
\widetilde{\chi})_l\bigg)t_0+O(1)\nonumber\\
\nonumber && +\log
h_{n}^{n}-\log\widetilde{\chi}+\widetilde{r}-\log2.
\end{eqnarray}
Using inequalities \eqref{4.700} and \eqref{4.7000},
$\Phi^{\prime\prime}<0$ and
\begin{eqnarray*}
(\log h^{n}_{n})_i=-(\log \widetilde{\chi})_i-\widetilde{r}_i
\end{eqnarray*}
at $(t_0, \xi_0)$, we can get from the above inequality
\begin{eqnarray}\label{4.80}
0&\leq&\Phi(2h_{n}^{n}-2v^{-1})t_0+\Phi^{\prime}F^{kl}r_k
r_l t_0\\
\nonumber &&+\Phi^{\prime}F^{kl}(\log \widetilde{\chi})_k r_l
t_0+O(1)+\log h_{n}^{n}-\log\widetilde{\chi}+\widetilde{r}-\log2.
\end{eqnarray}
From \eqref{2.07}, we have
\begin{eqnarray*}
v_{k}=\frac{\varphi^{j}\varphi_{jk}}{v}=\lambda^{\prime}v\varphi_{k}
-\lambda v^{2}h^{i}_{k}\varphi_{i}=\frac{\lambda^{\prime}}{\lambda}v
r_{k} - v^{2}h^{i}_{k}r_{i}.
\end{eqnarray*}
Then, we obtain
\begin{eqnarray*}
(\log\widetilde{\chi})_{k}=\frac{\chi_k}{\chi}=\frac{v}{\lambda}\frac{\lambda^{\prime}r_{k}v-\lambda
v_k}{v^2}=vh^{i}_{k}r_{k}.
\end{eqnarray*}
Since the principal curvatures are bounded by Corollary \ref{4.08}
and $F$ is also bounded by Propositions \ref{F1} and \ref{F2}, the
following two terms in \eqref{4.80} are controlled by
\begin{eqnarray}\label{4.8000}
\Phi^{\prime}F^{kl}r_k r_l t_0+\Phi^{\prime}F^{kl}(\log
\widetilde{\chi})_k r_l t_0\leq C g^{ij}r_i r_j t_0.
\end{eqnarray}
However, $g^{ij}r_i r_j t_0=\frac{|D\varphi|^2}{1+|D\varphi|^2}\leq
Ce^{-\mu t_0}t_0\leq C(n, \Sigma_0)$ by Lemma \ref{4.5}. Therefore,
from \eqref{4.80} at $(t_0, \xi_0)$, we get
\begin{eqnarray*}\label{4.1180}
0\leq\Phi(2h_{n}^{n}-2v^{-1})t_0+C
\end{eqnarray*}
for some $C=C(n, \Sigma_0)$, which implies
\begin{eqnarray*}
h_{n}^{n}\leq 1+\frac{C F}{t_0}.
\end{eqnarray*}
Thus, we have
\begin{eqnarray*}
w\leq t_{0}\log(1+\frac{C
F}{t_0})+t_{0}(-\log\widetilde{\chi}+\widetilde{r}-\log2)\leq C(n,
\Sigma_0),
\end{eqnarray*}
which means $w$ has a priori boundness. Hence,
\begin{eqnarray}\label{481}
\limsup_{t\rightarrow\infty}\sup_{M}\kappa_{i}(t, \cdot)\leq 1,
\qquad \forall 1\leq i\leq n.
\end{eqnarray}

Now we define the function
\begin{eqnarray*}
\psi=\log(-\Phi)-\log\widetilde{\chi}+\widetilde{r}-\log
2-\log\frac{1}{n}.
\end{eqnarray*}
By a similar computation to that in the proofs of Propositions
\ref{F1} and \ref{F2}, we know that $\psi$ satisfies
\begin{eqnarray*}
\frac{\partial}{\partial t}\psi-\Phi^{\prime}F^{ij}\psi_{ij}
&=&\Phi^{\prime}F^{ij}(\log(-\Phi))_{i}(\log(-\Phi))_{j}-
\Phi^{\prime}F^{ij}(\log\widetilde{\chi})_{i}(\log\widetilde{\chi})_{j}\\
&&+\frac{1}{\chi}\overline{R}(\nu, \partial_i, \lambda\partial_{r},
\partial_j)+\frac{2}{F v}-\Phi^{\prime}F^{ij}\overline{h}_{ij}.
\end{eqnarray*}
Then the Lipschitz function
\begin{eqnarray*}
\widetilde{\psi}=\sup_{\xi\in\Sigma}\psi(\cdot, \xi)
\end{eqnarray*}
satisfies
\begin{eqnarray*}
\frac{\partial}{\partial t}\widetilde{\psi}&\leq& Ce^{-\mu
t}-\Phi^{\prime}F^{ij}g_{ij}(1+O(e^{-\frac{n+2}{n}t}))+\frac{2}{F
v}-\Phi^{\prime}F^{ij}\overline{h}_{ij}\\&\leq& Ce^{-min\{\mu,
\frac{n+2}{n}\}t}+\Phi^{\prime}(\frac{2F}{v}-2F^{ij}g_{ij}),
\end{eqnarray*}
where we use a similar argument which has been done to
\eqref{4.8000} to get the first inequality by noticing \eqref{c4}
and \eqref{00.2}. Setting
\begin{eqnarray*}
A_\epsilon=\{t\in[T, +\infty)\cap D: \widetilde{\psi}(t)\geq
\epsilon\},
\end{eqnarray*}
where $D$ is the set of points of differentiability of
$\widetilde{\psi}$. Let $\epsilon>0$ and choose $T>0$ such that for
all $(t, \xi)\in[T, \infty)\times \Sigma$,
\begin{eqnarray*}
-\log\widetilde{\chi}+\widetilde{r}-\log 2<\frac{\epsilon}{2}.
\end{eqnarray*}
Then we have
\begin{eqnarray*}
\bigg(\log(-\Phi)-\log\frac{1}{n}\bigg)(t,
\xi_{t})>\frac{\epsilon}{2}
\end{eqnarray*}
for $t\in A_\epsilon$, where $\widetilde{\psi}(t)=\psi(t, \xi_{t})$.
Thus there exists
$0<\gamma=\gamma(\epsilon)=n(1-e^{-\frac{\epsilon}{2}})$ such that
\begin{eqnarray*}
F(t, \xi_{t})<n-\gamma,
\end{eqnarray*}
which implies
\begin{eqnarray*}
\Phi^{\prime}(\frac{2F}{v}-2F^{ij}g_{ij})\leq-
\Phi^{\prime}\frac{2n\gamma}{v}.
\end{eqnarray*}
Therefore, if $T$ is chosen large enough, we have
\begin{eqnarray*}
\frac{\partial}{\partial t}\widetilde{\psi}\leq-
\frac{1}{2}(\inf\Phi^{\prime})\frac{2n\gamma}{v}\equiv
\delta_\epsilon.
\end{eqnarray*}
Now it follows from Lemma \ref{Ju},
\begin{eqnarray*}
\lim_{t\rightarrow\infty}\sup\widetilde{\psi}(t)\leq 0.
\end{eqnarray*}
Hence, we have
\begin{eqnarray*}
\lim_{t\rightarrow\infty}\sup\sup_{\Sigma}\log(-\Phi)-\log\frac{1}{n}\leq
\lim_{t\rightarrow\infty}\sup\widetilde{\psi}(t)+\lim_{t\rightarrow\infty}\sup\sup_{\Sigma}(\log\widetilde{\chi}-\widetilde{r}+\log2)\leq
0,
\end{eqnarray*}
which leads to
\begin{eqnarray*}
\lim_{t\rightarrow\infty}\inf\inf_{M}F\geq n.
\end{eqnarray*}
Then, together with \eqref{481}, we conclude that the following fact
\begin{equation*}
\sup_{\Sigma}|\kappa_{i}(t, \cdot)-1|\rightarrow 0, \ \
t\rightarrow\infty, \ \ \forall 1\leq i\leq n
\end{equation*}
is true.
\end{proof}

\begin{theorem}\label{109}
Under the assumptions of theorem \ref{main}, the principle
curvatures of the flow hypersurfaces of \eqref{3.01} converge to $1$
exponentially fast. There exists $C=C(n, \Sigma_0)$ such that for
all $(t, \xi)\in [0, \infty)\times \Sigma$, the estimate
\begin{equation*}
|h^{i}_{j}-\delta^{i}_{j}|\leq Ce^{-\frac{2t}{n}}
\end{equation*}
holds.
\end{theorem}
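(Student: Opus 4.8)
The plan is to sharpen the convergence $\kappa_i\to 1$ of Lemma~\ref{4.088} to an exponential rate; since in an eigenbasis $h^i_j-\delta^i_j=\mathrm{diag}(\kappa_i-1)$, it suffices to prove the two one--sided bounds $\kappa_n\le 1+Ce^{-2t/n}$ and $\kappa_1\ge 1-Ce^{-2t/n}$. First I would record the equation governing $\epsilon^i_j:=h^i_j-\delta^i_j$. Combining \eqref{4.200}, \eqref{4.7}, \eqref{111}, \eqref{1111} with the asymptotic expansions \eqref{00.1}--\eqref{00.3}, \eqref{2.1}--\eqref{2.2}, and with the identity $\overline R_{\nu j\nu k}g^{ki}=-\delta^i_j+O(e^{-(n+1)t/n})$ — which follows from \eqref{c1} together with $g_{ij}=r_ir_j+\lambda^2\sigma_{ij}$ and $g^{ij}=\lambda^{-2}(\sigma^{ij}-\varphi^i\varphi^j v^{-2})$, the $|D\varphi|$--dependent pieces recombining (as in the hyperbolic case) into a remainder of order $e^{-(n+1)t/n}$ — one arrives at a parabolic equation of the schematic form
\[
\frac{\partial}{\partial t}\epsilon^i_j-\Phi'F^{kl}\nabla_k\nabla_l\epsilon^i_j=-\frac2F\,\epsilon^i_j+\Phi''F^iF_j+F^{kl,pq}h_{kl;}^{\ \ i}h_{pq;j}+Q^i_j+O(e^{-(n+1)t/n}),
\]
where $Q^i_j$ collects the terms that are at least quadratic in $(\epsilon^i_j)$ — the Simons--type curvature corrections and the cubic curvature terms in \eqref{1111} cancel to first order because the radial leaves are stationary for the rescaled flow — and the two terms $\Phi''F^iF_j$ and $F^{kl,pq}h_{kl;}^{\ \ i}h_{pq;j}$ are negative semidefinite by $\Phi''<0$ and concavity \eqref{4.700}--\eqref{4.7000}, hence help the maximum principle for the largest eigenvalue.

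The decisive feature is that the reaction coefficient $-2/F$ tends to $-2/n$, i.e.\ to exactly the rate to be proved, so that a naive maximum principle or Duhamel estimate applied to $(\kappa_i-1)e^{2t/n}$ would only give $Cte^{-2t/n}$ (the effective coefficient $\tfrac2F-\tfrac2n$ degenerates). I would therefore argue in two steps. \emph{Step 1 (a crude exponential rate).} Since $\kappa_i\to1$, for large $t$ the quadratic term $Q$ is dominated by $\delta_0|\epsilon|$ with $\delta_0$ as small as we wish; running the maximum principle on the largest eigenvalue $\zeta$ (regularised by the frozen--direction quotient $\widetilde\zeta=h_{ij}\eta^i\eta^j/g_{ij}\eta^i\eta^j$ as in the proof of Proposition~\ref{4.07}) and using that the concavity terms help, one gets $a'(t)\le-(\tfrac2n-\varepsilon_1)a(t)+Ce^{-(n+1)t/n}$ for the Lipschitz function $a(t)=\sup_{\Sigma}(\zeta-1)_+$, whence $\kappa_n-1\le Ce^{-(2/n-\varepsilon_1)t}$; the matching lower bound $F\ge n-Ce^{-(2/n-\varepsilon_1)t}$ comes from re--running the computations of Lemma~\ref{4.088} and Proposition~\ref{F2} with the weight $e^{(2/n-\varepsilon_1)t}$, and then the concavity inequality $F-n\ge\sum_i F^{ii}(\kappa_i-1)$ with $F^{ii}\ge c_0>0$ (Corollary~\ref{4.08}) and the upper bounds on all $\kappa_i$ yield $\kappa_1\ge1-Ce^{-(2/n-\varepsilon_1)t}$. \emph{Step 2 (self--improvement).} With a crude rate $\gamma=\tfrac2n-\varepsilon_1>\tfrac1n$ in hand, the remainder $Q^i_j+O(e^{-(n+1)t/n})$ decays like $e^{-\beta t}$ with $\beta=\min\{2\gamma,\tfrac{n+1}{n}\}>\tfrac2n$, and the rate deficit $\tfrac2n-\tfrac2F=\tfrac{2(F-n)}{nF}=O(|\epsilon|)=O(e^{-\gamma t})$ is integrable in $t$; an integrating--factor estimate along the flow — equivalently, the maximum principle with the now admissible weight $e^{2t/n}$, admissible precisely because $\int^{\infty}(\tfrac2n-\tfrac2F)\,dt<\infty$ — upgrades the rate to $|h^i_j-\delta^i_j|\le Ce^{-2t/n}$, first for the largest eigenvalue and then, via $F\ge n-Ce^{-2t/n}$ and the concavity argument above, for all of them.

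The step I expect to be the main obstacle is Step~1 combined with the lower bound: one must first reach a crude rate \emph{strictly better than $e^{-t/n}$} before the self--improvement of Step~2 can be invoked, and doing so forces one to exploit $\kappa_i\to1$ delicately enough that the quadratic error is genuinely sub--dominant and the effective reaction stays close to $-2/n$ throughout. A related technical nuisance is that $\Phi''F^iF_j$ and $F^{kl,pq}h_{kl;}^{\ \ i}h_{pq;j}$ carry the unfavourable sign in the evolution of the \emph{smallest} eigenvalue, which is why I would control $\kappa_1$ from below only indirectly, through the scalar quantity $F$ and the concavity inequality, rather than by a direct minimum--principle argument. Finally, one has to keep track of \eqref{00.1}--\eqref{00.3} and \eqref{2.1}--\eqref{2.2} carefully enough to confirm that \emph{every} purely ``background'' error term — i.e.\ every term not proportional to $\epsilon^i_j$ — decays at the faster rate $e^{-(n+1)t/n}$, so that only the self--interaction of $\epsilon$ sits at the critical rate $e^{-2t/n}$; it is this separation of scales that makes the time--integrability of the rate deficit the key input to Step~2.
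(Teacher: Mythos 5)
Your two--step scheme --- first a crude exponential rate, then a self--improvement to the sharp rate $e^{-2t/n}$ using the time--integrability of the rate deficit $\tfrac{2}{n}-\tfrac{2}{F}=O(|h^i_j-\delta^i_j|)$ --- is exactly the mechanism the paper uses, so the core idea is the same. The implementation differs in the monitored quantity, and the paper's choice is the cheaper one: it works with the scalar $G(t)=\sup_\Sigma\tfrac12|h^i_j-\delta^i_j|^2$, whose evolution \eqref{4.7100} (after the regrouping $h^i_kh^k_j=\epsilon^i_k\epsilon^k_j+2\epsilon^i_j+\delta^i_j$ and absorption of the gradient terms into $-\Phi'F^{kl}h^i_{j;k}h^i_{j;l}$ using $|\epsilon|\to0$) has reaction coefficient $-\tfrac4F$ and a forcing already carrying a factor $\max_j|\kappa_j-1|$; every correction to the linear rate is then \emph{multiplicative} in $G$ and of size $O(|\epsilon|)$, so \emph{any} positive crude rate $\mu_1$ makes the weighted quantity $\overline G=Ge^{4t/n}$ satisfy $\overline G'\le Ce^{-\mu_1 t/2}\overline G+O(e^{-\beta t})$ and hence be bounded. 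Your eigenvalue--based version, by contrast, treats the quadratic self--interaction $Q=O(|\epsilon|^2)$ as an \emph{additive} remainder, which forces the threshold $\gamma>\tfrac1n$ on the crude rate, and it needs a quantitative lower bound $F\ge n-Ce^{-\gamma t}$ that does not come for free from Lemma~\ref{4.088} (whose Lipschitz--ODE argument via Lemma~\ref{Ju} is purely qualitative) and whose attainable rate is a priori capped by the $C^1$ decay constant $\mu$ of Lemma~\ref{4.5}, which is only upgraded to $\tfrac2n$ in Theorem~\ref{800} \emph{after} the present theorem. Both soft spots are curable --- write $Q=O(|\epsilon|)\cdot\epsilon$ and fold the $O(|\epsilon|)$ factor into the integrating factor exactly as you already do for the rate deficit, which removes the threshold and lets any positive crude rate (obtainable by quantifying the $\widetilde\psi$ argument) suffice --- but as stated your Step~1 carries a coupled upper/lower iteration that the paper's squared--norm device avoids entirely. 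Your identification of why the naive weight $e^{2t/n}$ alone fails, and of the $O(e^{-(n+1)t/n})$ decay of all background errors, matches the paper.
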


\begin{proof}
Define the function
\begin{equation*}
G(t, \xi)=\frac{1}{2}|h^{i}_{j}-\delta^{i}_{j}|^{2}(t, \xi), \forall
(t, \xi)\in [T, \infty)\times \Sigma.
\end{equation*}
Using the evolution equation \eqref{4.70} of $h^{i}_{j}$, we can get
the evolution equation of $G(t, \xi)$ as follows
\begin{eqnarray}\label{4.710}
\frac{\partial}{\partial t}G(t,
\xi)-\Phi^{\prime}F^{kl}\nabla_{k}\nabla_{l}G(t,
\xi)&=&(h^{i}_{j}-\delta_{j}^{i})\bigg(\Phi^{\prime}F^{kl}h_{kp}h^{p}_{l}h^{i}_{j}
-2\Phi^{\prime}Fh^{ip}h_{pj}+\Phi^{\prime\prime}F^i
F_j\nonumber\\&&+\Phi^{\prime}F^{kl,pq}h_{kl; j}h_{pq;}^{\ \ \
i}+\Phi^{\prime}F^{kl}g_{kl}h^{i}_{j}+O(e^{-\frac{(n+1)}{n}t})\bigg)\nonumber
\\ \nonumber&&-\Phi^{\prime}F^{kl}h_{j; k}^{i}h^{i}_{j; l}.
\end{eqnarray}
Set
$$G(t)=G(t, \xi_t)=\sup_{\xi\in\Sigma}G(t, \xi).$$
Since $$F^{kl}h_{j; k}^{i}h^{i}_{j; l}\geq C |\nabla A|^2$$ and
$$|h^{i}_{j}-\delta^{i}_{j}|\rightarrow 0,$$ so for large $t$ we can
absorb the terms involving the derivatives of $h^{i}_{j}$ by
$\Phi^{\prime}F^{kl}h_{j; k}^{i}h^{i}_{j; l}$. There holds the
following identity
\begin{equation*}
h^{i}_{k}h^{k}_{j}=(h^{i}_{k}-\delta^{i}_{k})(h^{k}_{j}-\delta^{k}_{j})+2(h^{i}_{j}-\delta^{i}_{j})+\delta^{i}_{j}.
\end{equation*}
Thus we have
\begin{eqnarray}\label{4.7100}
\frac{\partial}{\partial
t}G(t)&=&(h^{i}_{j}-\delta_{j}^{i})\bigg(\Phi^{\prime}F^{kl}h_{kp}h^{p}_{l}h^{i}_{j}
-2\Phi^{\prime}F(h^{i}_{p}-\delta^{i}_{p})(h^{p}_{j}-\delta^{p}_{j})\nonumber\\&&-4\Phi^{\prime}F(h^{i}_{j}-\delta^{i}_{j})
-2\Phi^{\prime}F+\Phi^{\prime}F^{kl}g_{kl}h^{i}_{j}+O(e^{-\frac{(n+1)}{n}t})\bigg)\\
\nonumber
&=&(h^{i}_{j}-\delta_{j}^{i})\bigg(\Phi^{\prime}F^{kl}(h_{kp}h^{p}_{l}-2h_{kl}+g_{kl})h^{i}_{j}
-2\Phi^{\prime}F(h^{i}_{p}-\delta^{i}_{p})(h^{p}_{j}-\delta^{p}_{j})\nonumber\\&&-2\Phi^{\prime}F(h^{i}_{j}
-\delta^{i}_{j})+O(e^{-\frac{(n+1)}{n}t})\bigg)\nonumber
\\ \nonumber&&
\end{eqnarray}
Choose Riemannian normal coordinates at $(t, \xi_t)$ such that at
this point we have
\begin{eqnarray*}
g_{ij}=\delta_{ij}, \ \ \ h_{ij}=\kappa_i \delta_{ij}, \ \ \
\kappa_1\leq \kappa_2\leq...\leq \kappa_{n}.
\end{eqnarray*}
For $t$ large enough, we can find
$\epsilon<\frac{4}{\sup_{\Sigma}F}$ such that
\begin{eqnarray}\label{909}
\frac{d}{dt}G(t)&\leq&\bigg(-\frac{4}{F}+2\Phi^{\prime}\sum_{j=1}^{n}|\kappa_j||\kappa_j-1|
\sum_{k=1}^{n}F^{kk}+\frac{4}{F^2}\max_{1\leq j\leq
n}|\kappa_j-1|\bigg)G(t)+\max_{1\leq j\leq
n}|\kappa_j-1|O(e^{-\frac{n+1}{n}t})\nonumber\\
\nonumber&\leq&(-\frac{4}{F}+\epsilon)G(t)+ \max_{1\leq j\leq
n}|\kappa_j-1|O(e^{-\frac{n+1}{n}t}).
\end{eqnarray}
Therefore, we have
\begin{eqnarray*}
G(t)\leq Ce^{-\mu_{1}t},
\end{eqnarray*}
where $\mu_1=\min\{\frac{4}{\sup_{M}F}-\epsilon, \frac{n+1}{n}\}>0$.
Thus,
\begin{eqnarray}\label{401}
|-\frac{4}{F}+\frac{4}{n}|\leq C\max_{i}|\kappa_i-1|\leq
Ce^{-\frac{1}{2}\mu_1 t}.
\end{eqnarray}
Now we define
\begin{equation*}
\overline{G}=\sup_{\Sigma}\frac{1}{2}|h^{i}_{j}-\delta^{i}_{j}|^{2}e^{\frac{4}{n}t}.
\end{equation*}
Similar to the process of getting \eqref{909}, we can obtain
\begin{eqnarray*}
\frac{d}{dt}\overline{G}&\leq&\bigg(-\frac{4}{F}+\frac{4}{n}+2\Phi^{\prime}\sum_{j=1}^{n}|\kappa_j||\kappa_j-1|
\sum_{k=1}^{n}F^{kk}+\frac{4}{F^2}\max_{1\leq j\leq
n}|\kappa_j-1|\bigg)\overline{G}+O(e^{-\frac{n+1}{n}t+\frac{4}{n}
t-\frac{1}{2}\mu_1 t})\\&\leq& Ce^{-\frac{1}{2}\mu_1
t}\overline{G}+O(e^{-\frac{n-3}{n}t-\frac{1}{2}\mu_1 t}),
\end{eqnarray*}
where we use \eqref{401} to get the last inequality. Thus,
\begin{equation*}
\overline{G}\leq C(n, \Sigma_0),
\end{equation*}
\end{proof}
which implies our result.

\begin{theorem}\label{800}
The estimate \eqref{4.61} in Lemma \ref{4.5} is true for
$\mu=\frac{2}{n}$.
\end{theorem}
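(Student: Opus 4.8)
The plan is to rerun the maximum principle argument of Lemma~\ref{4.5} for $w=\tfrac12|D\varphi|^2$, but now feeding in the sharp asymptotics that become available once Theorem~\ref{109} is proved. By Theorem~\ref{109} the principal curvatures satisfy $\kappa_i\le 1+Ce^{-\frac{2t}{n}}$ for every $i$, so monotonicity and $1$-homogeneity of $F$ give $F(h^i_j)\le(1+Ce^{-\frac{2t}{n}})F(1,\dots,1)=n\big(1+Ce^{-\frac{2t}{n}}\big)$, i.e.\ $F(h^i_j)=n+O(e^{-\frac{2t}{n}})$. Moreover, by \eqref{00.2} one has $\lambda''/\lambda\ge1$ and $\lambda''/\lambda=1+O(e^{-\frac{n+2}{n}t})$. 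The essential point is that the defect $F(h^i_j)-n$ now decays \emph{exponentially}, not merely to $0$; this is exactly what will promote the decay rate of Lemma~\ref{4.5} to the sharp value $\tfrac2n$.

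First I would revisit the identity for $\tfrac{\partial}{\partial t}w$ derived in the proof of Lemma~\ref{4.5}. At any point where $w(t,\cdot)$ attains a spatial maximum one has $\varphi_{ik}\varphi^k=0$ and $(w_{kl})\le0$, and always $\varphi_k\varphi_l\le|D\varphi|^2\sigma_{kl}$; discarding the three non-positive terms $F^{kl}w_{kl}$, $F^{kl}(\varphi_k\varphi_l-|D\varphi|^2\sigma_{kl})$ and $-F^{kl}\varphi_{ik}\varphi^i_l$ from that identity, and using $F^{kl}\widetilde g_{kl}\ge F(1,\dots,1)=n$ together with $F=F(\widetilde h^i_j)=\lambda F(h^i_j)$, one obtains at such a point
\[
\frac{\partial}{\partial t}w\ \le\ -\frac{2\lambda\lambda''}{F^2}\,F^{kl}\widetilde g_{kl}\,w\ =\ -\frac{2(\lambda''/\lambda)}{F(h^i_j)^2}\,F^{kl}\widetilde g_{kl}\,w\ \le\ -\frac{2n(\lambda''/\lambda)}{F(h^i_j)^2}\,w .
\]
Inserting the asymptotics of the first paragraph, this reads $\tfrac{\partial}{\partial t}w\le\big(-\tfrac2n+Ce^{-\frac{2t}{n}}\big)w$ at every spatial maximum point, for all $t\ge0$.

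Next I would pass to $\Theta(t):=\sup_{\mathbb{S}^n}w(t,\cdot)=\tfrac12\sup_{\mathbb{S}^n}|D\varphi(t,\cdot)|^2$, which is locally Lipschitz in $t$; at almost every $t$ its derivative equals $\tfrac{\partial}{\partial t}w$ at a point realizing the spatial supremum, so the previous inequality yields $\Theta'(t)\le\big(-\tfrac2n+Ce^{-\frac{2t}{n}}\big)\Theta(t)$ for a.e.\ $t\ge0$. Integrating this differential inequality,
\[
\Theta(t)\ \le\ \Theta(0)\,\exp\!\Big(-\tfrac2n t+\tfrac{Cn}{2}\big(1-e^{-\frac{2t}{n}}\big)\Big)\ \le\ C(n,\Sigma_0)\,\Theta(0)\,e^{-\frac{2t}{n}} ,
\]
so $|D\varphi|^2\le C(n,\Sigma_0)\sup_{\mathbb{S}^n}|D\varphi(0,\cdot)|^2\,e^{-\frac{2t}{n}}$, which is the assertion that the exponent $\mu$ in \eqref{4.61} may be taken to be $\tfrac2n$.

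The step I expect to be the crux is arranging the coefficient in front of $w$ to be $\le-\tfrac2n$ up to a term that is \emph{integrable in $t$}: a merely $o(1)$ defect — which is all that the qualitative convergence $\kappa_i\to1$ of Lemma~\ref{4.088} supplies — would only yield the rate $\tfrac2n-\epsilon$ for every $\epsilon>0$, not the sharp $\tfrac2n$. This is precisely where the quantitative estimate $|h^i_j-\delta^i_j|\le Ce^{-\frac{2t}{n}}$ of Theorem~\ref{109} is indispensable; one also uses $\lambda''/\lambda\ge1$, so that the factor $\lambda''/\lambda$ can only help. Everything else — the identity for $\partial_t w$, the signs of the discarded terms, and the reduction to the Lipschitz envelope $\Theta$ — is already present, essentially verbatim, in the proof of Lemma~\ref{4.5}.
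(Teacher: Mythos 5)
Your proposal is correct and follows essentially the same route as the paper: rerun the $C^1$ maximum-principle computation of Lemma \ref{4.5}, use the exponential convergence $F\to n$ supplied by Theorem \ref{109} (the paper invokes it through \eqref{401}) together with $\lambda''/\lambda=1+O(e^{-\frac{n+2}{n}t})$ to make the defect in the coefficient integrable, and conclude by an ODE comparison for the Lipschitz envelope. The only cosmetic difference is that the paper bounds $\sup w\,e^{\frac{2t}{n}}$ directly while you integrate the differential inequality for $\sup w$; these are equivalent.
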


\begin{proof}
Define
\begin{eqnarray*}
\widetilde{w}=\sup_{x\in \mathbb{S}^{n}}\frac{1}{2}|D\varphi(\cdot,
x)|^2e^{-\frac{2}{n}t}.
\end{eqnarray*}
The same calculation as in \eqref{201} implies
\begin{eqnarray*}
\frac{d}{dt}\widetilde{w} \leq -\frac{2}{F^2}
\frac{\lambda^{\prime\prime}}{\lambda}F^{kl}\widetilde{g}_{kl}\widetilde{w}+\frac{2}{n}\widetilde{w}\leq
-\frac{2n}{F^2}
\frac{\lambda^{\prime\prime}}{\lambda}\widetilde{w}+\frac{2}{n}\widetilde{w}.
\end{eqnarray*}
\end{proof}
Recalling the estimate \eqref{00.2}
\begin{eqnarray*}
\frac{\lambda^{\prime\prime}}{\lambda}=1-\frac{1}{2}m(1-n)\lambda^{-n-2}=1+O(e^{-\frac{n+2}{n}t}).
\end{eqnarray*}
Together with \eqref{401}, we have
\begin{eqnarray*}
\frac{d}{dt}\widetilde{w} \leq Ce^{-\frac{1}{2}\mu_1
t}\widetilde{w},
\end{eqnarray*}
which implies $\widetilde{w}$ is bounded from above. Therefore, the
theorem holds.

\begin{theorem}\label{010}
Under the assumptions of Theorem \ref{main}. There exists a constant
$C=C(n, \Sigma_0)$ such that
\begin{equation*}
\Big|D^2 \varphi\Big|\leq Ce^{-\frac{t}{n}}.
\end{equation*}
\end{theorem}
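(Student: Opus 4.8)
The plan is to obtain the Hessian bound purely algebraically from the exact relation \eqref{2.07} between the shape operator and $D^2\varphi$, and then substitute the sharp exponential decay already established for $h^i_j-\delta^i_j$ (Theorem \ref{109}) and for $|D\varphi|$ (Theorem \ref{800}). Rewriting \eqref{2.07} as $\lambda v\,h^i_j=\lambda'\delta^i_j-\widetilde g^{ik}\varphi_{kj}$ and contracting with $\widetilde g_{li}$ yields the tensor identity
$$\varphi_{lj}=\lambda'\widetilde g_{lj}-\lambda v\,\widetilde g_{li}h^i_j=(\lambda'-\lambda v)\,\widetilde g_{lj}+\lambda v\,\widetilde g_{li}\,(\delta^i_j-h^i_j),$$
so it suffices to show that the $\sigma$-norm of each of the two terms on the right is $O(e^{-t/n})$.

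For the ingredients: by Theorem \ref{800} we have $|D\varphi|^2\le Ce^{-2t/n}$, so by \eqref{2.7} $v-1=\tfrac{|D\varphi|^2}{v+1}=O(e^{-2t/n})$ and $v$ is uniformly bounded; moreover $\widetilde g_{ij}=\sigma_{ij}+\varphi_i\varphi_j$ (from $g_{ij}=\lambda^2\widetilde g_{ij}$ and $g_{ij}=\lambda^2(\sigma_{ij}+\varphi_i\varphi_j)$), which satisfies $\sigma_{ij}\le\widetilde g_{ij}\le(1+|D\varphi|^2)\sigma_{ij}$, hence is uniformly bounded and uniformly positive definite. By Lemma \ref{4.01} together with \eqref{2.01}, $c_1 e^{t/n}\le\lambda\le c_2 e^{t/n}$, and \eqref{2.001} gives $\lambda'-\lambda=\tfrac{1-m\lambda^{1-n}}{\lambda'+\lambda}=O(\lambda^{-1})=O(e^{-t/n})$. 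Consequently $\lambda'-\lambda v=(\lambda'-\lambda)+\lambda(1-v)=O(e^{-t/n})+O(e^{t/n})\,O(e^{-2t/n})=O(e^{-t/n})$, which controls the first term. For the second term, $\lambda v=O(e^{t/n})$, $\widetilde g_{li}$ is bounded, and Theorem \ref{109} gives $|\delta^i_j-h^i_j|\le Ce^{-2t/n}$ (the norm of the endomorphism $\delta^i_j-h^i_j$ with respect to $g$, $\widetilde g$ and $\sigma$ are all uniformly equivalent, using $g=\lambda^2\widetilde g$ and $\sigma\le\widetilde g\le C\sigma$), so $\lambda v\,\widetilde g_{li}(\delta^i_j-h^i_j)=O(e^{t/n})\,O(e^{-2t/n})=O(e^{-t/n})$. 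Taking $\sigma$-norms in the displayed identity then gives $|D^2\varphi|\le Ce^{-t/n}$ on all of $[0,\infty)\times\mathbb{S}^n$.

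There is no serious obstacle here: the estimate is essentially book-keeping with the decay rates already in hand. The one point that must not be overlooked is that the exponents are tight — the amplifying factor $\lambda\sim e^{t/n}$ multiplies the $O(e^{-2t/n})$ quantities $1-v$ and $\delta^i_j-h^i_j$, and the products land exactly at order $e^{-t/n}$ with no slack. Hence one genuinely needs the optimal rate $\mu=\tfrac2n$ from Theorem \ref{800} and the optimal rate $\tfrac2n$ from Theorem \ref{109}; the crude bound $|D\varphi|^2\le Ce^{-\mu t}$ of Lemma \ref{4.5} with an unspecified small $\mu$ would not close the estimate. If one prefers not to invoke that the asymptotic estimates hold down to $t=0$, one may instead carry out the argument on $[T_0,\infty)$ for a large $T_0=T_0(n,\Sigma_0)$ and absorb into $C$ the bound on the compact piece $[0,T_0]$, where $|D^2\varphi|$ is finite by smoothness of the flow.
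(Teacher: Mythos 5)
Your proof is correct and follows essentially the same route as the paper: both arguments read the Hessian bound off the algebraic identity \eqref{2.07} and feed in the sharp rates $|D\varphi|^2=O(e^{-2t/n})$ and $|h^i_j-\delta^i_j|=O(e^{-2t/n})$ against the amplifying factor $\lambda\sim e^{t/n}$. The only (cosmetic) difference is that you contract with $\widetilde g_{li}$ to solve for $\varphi_{lj}$ exactly, whereas the paper keeps the term $v^{-2}\varphi^i\varphi^k\varphi_{kj}$ on the right and absorbs $Ce^{-2t/n}|D^2\varphi|$ for $t\ge T$ large; your version avoids that absorption step but is otherwise the same computation.
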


\begin{proof}
Recalling \eqref{2.07}, we have
\begin{equation*}
\varphi^{i}_{j}=v^{-2}\varphi^{i}\varphi^{k}\varphi_{kj}+\lambda^{\prime}\delta^{i}_{j}-v\lambda
h^{i}_{j}.
\end{equation*}
From Lemma \ref{4.01}, we get
\begin{eqnarray*}
|\lambda^{\prime}-\lambda|=\frac{1-m\lambda^{1-n}}{\lambda\sqrt{1+\lambda^2-m\lambda^{1-n}}}\leq\frac{1}{\lambda}\leq
Ce^{-\frac{1}{n}t}.
\end{eqnarray*}
Together with Theorems \ref{010} and \ref{109}, we obtain
\begin{eqnarray*}
|D^{2}\varphi|&\leq& C|D \varphi|^2
|D^{2}\varphi|+|\lambda^{\prime}\delta^{i}_{j}-\lambda
\delta^{i}_{j}|+|\lambda \delta^{i}_{j}-v\lambda
\delta^{i}_{j}|+|v\lambda \delta^{i}_{j}-v\lambda h^{i}_{j}|\\&
\leq& Ce^{-\frac{2}{n}t}|D^{2}\varphi|+Ce^{-\frac{1}{n}t}.
\end{eqnarray*}
Choosing $T$ large enough ($Ce^{-\frac{2}{n}t}<\frac{1}{2}$), we
know that the estimate
\begin{equation*}
|D^{2}\varphi|\leq Ce^{-\frac{1}{n}t}
\end{equation*}
holds  for $t>T$.
\end{proof}

Clearly, from Theorem \ref{010}, we can show that there exists a
constant $C=C(n, \Sigma_0)$ such that
\begin{equation*}
\parallel D^2 r\parallel_{\mathbb{S}^{n}}\leq C.
\end{equation*}
Then by Krylov-Safonov estimate \cite{Kr}, we have
\begin{equation*}
\parallel r\parallel_{C^{2, \alpha}({\mathbb{S}^{n}})}\leq C(n,
\Sigma_0),
\end{equation*}
which implies the following conclusion.
\begin{theorem}\label{0100}
Under the assumptions of theorem \ref{main}. The function
$$\widetilde{r}(t, \theta)=r(t, \theta)-\frac{t}{n}$$
 converge to a well-defined $C^2$
function $f(\theta)$ in $C^{2, \alpha}$.
\end{theorem}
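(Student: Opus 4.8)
The plan is to combine the uniform spatial higher-order bounds already available for $r(t,\cdot)$ with the exponential smallness of $\partial_t\widetilde r$, and then extract the limit by a Cauchy argument in $C^0$ followed by interpolation.

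\emph{Step 1 (uniform higher regularity).} Since $\widetilde r=r-\tfrac tn$ differs from $r$ by a spatially constant term, all spatial covariant derivatives of $\widetilde r$ and $r$ coincide. From $\varphi=\int_c^{r}\tfrac{ds}{\lambda(s)}$ one has $r_i=\lambda\varphi_i$ and $r_{ij}=\lambda\lambda'\varphi_i\varphi_j+\lambda\varphi_{ij}$; combining this with $\lambda,\lambda'=O(e^{t/n})$, $|D\varphi|^2\le Ce^{-2t/n}$ (Theorem \ref{800}) and $|D^2\varphi|\le Ce^{-t/n}$ (Theorem \ref{010}) yields $\|D^2 r\|_{C^0(\mathbb{S}^n)}\le C(n,\Sigma_0)$ uniformly in $t$ — the exponential decay of $D^2\varphi$ exactly beats the exponential growth of $\lambda$. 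Feeding this into the Krylov–Safonov estimate \cite{Kr} for the uniformly parabolic equation \eqref{4.2} (uniform parabolicity coming from Propositions \ref{F1}, \ref{F2} and Corollary \ref{4.08}), and then bootstrapping since $F$ and $\lambda$ are smooth, I would obtain a uniform bound $\|\widetilde r(t,\cdot)\|_{C^{m}(\mathbb{S}^n)}\le C(n,\Sigma_0,m)$ for every $m$; in particular $\|\widetilde r(t,\cdot)\|_{C^{2,\alpha'}(\mathbb{S}^n)}\le C$ for some $\alpha'>\alpha$.

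\emph{Step 2 (exponential decay of $\partial_t\widetilde r$).} From \eqref{3.2}, $\partial_t\widetilde r=\frac vF-\frac1n=\frac{nv-F}{nF}$. Here $v-1=\frac{|D\varphi|^2}{v+1}\le Ce^{-2t/n}$ by Theorem \ref{800}; and by Theorem \ref{109} the principal curvatures satisfy $\max_i|\kappa_i-1|\le Ce^{-2t/n}$, while $F$ is bounded below (Proposition \ref{F2}) and Lipschitz near $(1,\dots,1)$ with $F(1,\dots,1)=n$, so $|F-n|\le Ce^{-2t/n}$. Hence $|\partial_t\widetilde r(t,\theta)|\le Ce^{-2t/n}$ on $[0,\infty)\times\mathbb{S}^n$. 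In particular $\widetilde r$ is bounded, and for $T\le t_1<t_2$,
$$\sup_\theta|\widetilde r(t_2,\theta)-\widetilde r(t_1,\theta)|\le\int_{t_1}^{t_2}\sup_\theta|\partial_s\widetilde r(s,\theta)|\,ds\le \tfrac{Cn}{2}\,e^{-2t_1/n},$$
so $\widetilde r(t,\cdot)$ is uniformly Cauchy as $t\to\infty$ and converges in $C^0(\mathbb{S}^n)$ to some $f$ with $\|\widetilde r(t,\cdot)-f\|_{C^0}\le Ce^{-2t/n}$.

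\emph{Step 3 (upgrade of the convergence).} The family $\{\widetilde r(t,\cdot)-f\}_{t\ge T}$ is bounded in $C^{2,\alpha'}(\mathbb{S}^n)$ by Step 1 and tends to $0$ in $C^0$ by Step 2. The interpolation inequality $\|u\|_{C^{2,\alpha}}\le\varepsilon\|u\|_{C^{2,\alpha'}}+C(\varepsilon)\|u\|_{C^0}$, valid since $\alpha<\alpha'$, then forces $\|\widetilde r(t,\cdot)-f\|_{C^{2,\alpha}}\to 0$; in particular $f\in C^{2,\alpha}(\mathbb{S}^n)\subset C^2(\mathbb{S}^n)$ and $\widetilde r(t,\cdot)\to f$ in $C^{2,\alpha}$, which is the assertion.

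The only step requiring genuine work is Step 1, but that work is already done: it is precisely the chain of estimates Propositions \ref{F1}, \ref{F2}, Corollary \ref{4.08}, Theorems \ref{109}, \ref{800}, \ref{010} together with the parabolic regularity theory, culminating in $\|D^2\varphi\|\le Ce^{-t/n}$. Once the uniform $C^{2,\alpha'}$ bound and the decay $|\partial_t\widetilde r|\le Ce^{-2t/n}$ are in hand, the passage to the limit is the routine Cauchy-plus-interpolation argument above and needs no further geometric input; Theorem \ref{0100} is thus essentially a corollary of the preceding results.
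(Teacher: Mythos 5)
Your proposal is correct, and its overall skeleton (uniform $C^{2}$ bound on $\widetilde r$ from the decay of $D^2\varphi$, Krylov--Safonov, then convergence of $\widetilde r(t,\cdot)$ itself, then upgrading the mode of convergence by compactness/interpolation) is the same as the paper's. The one step where you genuinely diverge is the convergence mechanism. The paper only establishes the \emph{one-sided} bound $\partial_t\widetilde r=\frac{v-1}{F}+\frac{n-F}{nF}\ge -C e^{-t/n}$ (using $v\ge 1$ and the upper bound on the curvatures), concludes that $\widetilde r - nCe^{-t/n}$ is nondecreasing and bounded, and hence converges pointwise; you instead prove the \emph{two-sided} bound $|\partial_t\widetilde r|\le Ce^{-2t/n}$ (which additionally uses $F\ge n - Ce^{-2t/n}$, available from Theorem \ref{109} together with the Lipschitz continuity of $F$ on the compact set $K$ of Corollary \ref{4.08}) and integrate to get a uniform Cauchy property. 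Your route costs slightly more input but buys a quantitative rate, $\|\widetilde r(t,\cdot)-f\|_{C^0}\le Ce^{-2t/n}$, and uniform rather than merely pointwise convergence at the first stage. You are also more careful than the paper on the final step: a family bounded in $C^{2,\alpha}$ and converging in $C^0$ only converges in $C^{2,\beta}$ for $\beta<\alpha$, so your device of first securing a $C^{2,\alpha'}$ bound with $\alpha'>\alpha$ (via bootstrapping, which the paper defers to a remark) and then interpolating is the clean way to get convergence in $C^{2,\alpha}$ itself.
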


\begin{proof}
Because of the boundedness of $\widetilde{r}=r-\frac{t}{n}$ in
$C^{2}(\mathbb{S}^n)$, we only have to show the pointwise limit
$$\lim_{t\rightarrow\infty}(r-\frac{t}{n})$$
exists for all $x\in \mathbb{S}^n$. We have
$$\frac{\partial}{\partial t}\widetilde{r}=\frac{v}{F}-\frac{1}{n}=\frac{v-1}{F}+\frac{n-F}{nF}\geq-C(n, \Sigma_0)e^{-\frac{t}{n}}.$$
Thus,
$$(\widetilde{r}-nCe^{-\frac{t}{n}})^{\prime}\geq 0,$$
which implies the result.
\end{proof}

\begin{remark}
\rm{Following the techniques in \cite[Section 6]{Ge2} and
\cite[Section 5]{Sc1}, we may also get estimates of high order for
$\widetilde{r}$
\begin{equation*}
\parallel \widetilde{r}\parallel_{C^{k}({\mathbb{S}^n})}\leq C(n,
\Sigma_0), \ \ \forall k\in \mathbb{N}.
\end{equation*}
Therefore, the $C^{\infty}$ convergence in the above theorem may be
obtained.}
\end{remark}

\vspace{1cm}

\end{document}